\documentclass[11pt]{article}

\usepackage[T1]{fontenc}
\usepackage[utf8]{inputenc}
\usepackage{lmodern}
\usepackage{microtype}
\usepackage{indentfirst}
\usepackage[a4paper,margin=1in]{geometry}
\usepackage{amsmath,amssymb,amsthm}
\usepackage{xcolor}
\usepackage{hyperref}
\usepackage[nameinlink,capitalise,noabbrev]{cleveref}

\hypersetup{
  colorlinks=true,
  linkcolor=blue,
  citecolor=blue,
  urlcolor=blue,
  pdftitle={A Gamma envelope and sharp moment inequalities for Gaussian quadratic forms},
  pdfauthor={Zhekai Pang}
}

\makeatletter
\renewcommand{\tagform@}[1]{%
  \maketag@@@{\color{blue}(\ignorespaces#1\unskip\@@italiccorr)}%
}
\makeatother

\newtheorem{theorem}{Theorem}[section]
\newtheorem{lemma}[theorem]{Lemma}
\newtheorem{corollary}[theorem]{Corollary}

\newcommand{\E}{\mathbb{E}}
\newcommand{\N}{\mathbb{N}}
\newcommand{\R}{\mathbb{R}}
\newcommand{\dd}{\,\mathrm{d}}

\title{A Gamma envelope and sharp moment inequalities
for Gaussian quadratic forms}
\author{Zhekai Pang\\
Universitat Pompeu Fabra\\
\texttt{zhekai.pang@upf.edu}}
\date{}

\begin{document}
\maketitle

\begin{abstract}
We study extremal absolute central moments of Gaussian quadratic forms under
a fixed Frobenius norm.  For a nonzero
real symmetric matrix $M$ and $G\sim N(0,I_n)$, we construct an explicit
centered difference of Gamma variables with the same mean, variance, and
third centered moment as $G^{\mathsf T}MG-\operatorname{tr}M$.  After
normalization, replacing the quadratic form by this Gamma difference does not
decrease $\E f$ for every $C^2$ test function $f$ such that $f''$ is convex
and $f$, $f'$, and $f''$ have polynomial growth.  In particular, it gives an
explicit upper bound for every absolute moment of order $p\ge3$.  We then
prove that, for every $p\ge4$, this bound
is maximized by the centered square $g^2-1$ of a single standard Gaussian
variable $g$.  The resulting sharp inequality is
\[
 \bigl\|G^{\mathsf T}MG-\operatorname{tr}M\bigr\|_p
 \le \|g^2-1\|_p\,\|M\|_{\mathrm F},
\]
with equality if and only if $M$ has rank one.
\end{abstract}

\medskip
\noindent\textbf{Keywords.} Gaussian quadratic form, Gamma envelope,
convexity, moment comparison, Stein identity.

\smallskip
\noindent\textbf{2020 Mathematics Subject Classification.}
Primary 60E15, Secondary 60E10.

\section{Introduction}

Gaussian quadratic forms constitute a classical family of nonlinear Gaussian
functionals, and the spectral theorem represents every centered member of this
family as a finite weighted sum of independent centered chi-square variables.
Consequently, the eigenvalues of the defining matrix determine the
distribution, although this complete spectral description does not make the
dependence of absolute moments on the spectrum explicit.  In
Question~6 of~\cite{BNZ}, Bartczak, Nayar, and Zwara asked for the maximal
absolute central moment of a Gaussian quadratic form under a fixed variance.

The classical theory is largely concerned with exact distributions and
moments: Cochran characterized the quadratic forms having a chi-square
distribution~\cite{Cochran}, Imhof gave a method for computing the
distribution of a general form~\cite{Imhof}, and further distributional
results are collected in~\cite{MathaiProvost}.  Magnus derived formulas for
expectations of products of Gaussian quadratic forms~\cite{Magnus}, whereas a
separate line of work established estimates that are uniform over the
dimension and the defining matrix, including Whittle's moment
bounds~\cite{Whittle}, the Hanson--Wright tail
inequality~\cite{HansonWright,RudelsonVershynin}, and Lata\l a's two-sided
estimates for Gaussian chaoses~\cite{Latala}.

These general estimates do not determine the maximizing spectrum under a fixed
variance.  Chasapis, Singh, and Tkocz proved that, for every positive integer
$k$, the $k$th signed centered moment of a weighted sum of i.i.d. Gamma
variables with nonnegative coefficients is Schur-convex in the squared
coefficients~\cite{ChasapisSinghTkocz}.  For chi-square variables, this yields
rank-one maximization for nonnegative coefficients and even orders.  Our main
theorem allows arbitrary real coefficients and every
real $p\ge4$, and it also determines all equality cases.
Question~6 in~\cite{BNZ} is posed for every $p>2$.  The present paper
establishes rank-one extremality for every real $p\ge4$ and does not address
the range $2<p<4$.

Centered Gaussian quadratic forms also belong to the second Wiener chaos,
where Gamma and variance--Gamma variables arise as natural approximating
objects: Nourdin and Peccati obtained a moment criterion for convergence to a
centered Gamma variable~\cite{NourdinPeccati}, Gaunt developed Stein's method
for variance--Gamma approximation~\cite{Gaunt}, Eichelsbacher and Th\"ale
extended this method to Wiener chaos~\cite{EichelsbacherThale}, and Azmoodeh,
Eichelsbacher, and Th\"ale obtained optimal bounds on the second Wiener chaos
\cite{AzmoodehEichelsbacherThale}.  In this literature the Gamma-type variable
is an approximation target, whereas in the present paper it provides an explicit
upper envelope.

For each normalized quadratic form, \cref{thm:q6-gamma-reduction} constructs a
centered difference of two Gamma variables with matching first three centered
moments and proves that this replacement does not decrease the expectation for
every $C^2$ function whose second derivative is convex and whose value and first
two derivatives have polynomial growth.  This assumption is the
generalized fourth-order convexity condition used in the theory of higher
convex orders~\cite{DenuitLefevreShaked}, and it does not require a classical
fourth derivative.  Related comparison results for L\'evy processes based on
their jump measures are developed in~\cite{BergenthumRuschendorf}.  The proof
below is tailored to the generalized fourth-order convexity condition and
relies on the chord estimate
\eqref{eq:q6-convexity-sum}, which verifies that ordering directly and reduces
the full coefficient vector to a one-parameter Gamma difference matching the
first three centered moments.

Applying the comparison to $|x|^p$ gives Corollary~\ref{cor:q6-matrix-envelope} for
every $p\ge3$, with an upper bound that depends on a single scalar parameter.
The scalar comparison in \cref{thm:q6-scalar-all-high} shows that this bound
is maximized at the rank-one endpoint for every $p\ge4$.

Throughout the paper, $\|X\|_p=(\E|X|^p)^{1/p}$ for a random variable $X$,
and $\|M\|_{\mathrm F}=\sqrt{\operatorname{tr}(M^2)}$ for a real symmetric
matrix $M$.

\begin{theorem}\label{thm:q6-main}
For every $p\ge4$, for every $n\in\N^+$, and for every nonzero real symmetric
matrix $M\in\R^{n\times n}$, let $G\sim N(0,I_n)$ and
$g\sim N(0,1)$.  Then
\[
 \bigl\|G^{\mathsf T}MG-\operatorname{tr}M\bigr\|_p
 \le \|g^2-1\|_p\,\|M\|_{\mathrm F}.
\]
Equality holds if and only if $M$ has rank one, and the sharp constant is
\begin{equation}\label{eq:q6-main-constant}
 \|g^2-1\|_p
 =\left[\frac1{\sqrt{2\pi}}\int_0^\infty
 |x-1|^p x^{-1/2}\exp(-x/2)\dd x\right]^{1/p}.
\end{equation}
\end{theorem}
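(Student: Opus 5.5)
The plan is to follow the two-step architecture announced in the introduction. First, diagonalize $M=O^{\mathsf T}\Lambda O$. By rotation invariance of $G$, we get $G^{\mathsf T}MG-\operatorname{tr}M\overset{d}{=}\sum_i\lambda_i(g_i^2-1)$. Normalize so that $\|M\|_{\mathrm F}=1$, i.e.\ $\sum\lambda_i^2=1$. Let $X=\sum_i\lambda_i(g_i^2-1)$. Its mean is $0$, its variance is $2$, and its third centered moment is $8\sum\lambda_i^3=:8s$, where $s\in[-1,1]$.

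Then invoke \cref{thm:q6-gamma-reduction}. It supplies a centered Gamma difference $Y_s$ with the same first three centered moments. Since $f(x)=|x|^p$ with $p\ge4$ (indeed $p\ge3$) is $C^2$, has $f''=p(p-1)|x|^{p-2}$ convex, and has polynomially growing derivatives, we get $\E|X|^p\le\E|Y_s|^p$. This is Corollary~\ref{cor:q6-matrix-envelope}: $\|X\|_p\le\phi_p(s):=\|Y_s\|_p$.

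Next, apply the scalar comparison \cref{thm:q6-scalar-all-high}. It asserts that for $p\ge4$ the function $s\mapsto\E|Y_s|^p$ on $[-1,1]$ is maximized at the rank-one endpoints $|s|=1$. By the symmetry $Y_{-s}\overset{d}{=}-Y_s$ it suffices to treat $s\ge0$. At $s=\pm1$ the Gamma difference degenerates to $\pm(g^2-1)$, i.e.\ $\pm(\chi^2_1-1)$, because the moment matching forces the shape parameter of $\Gamma(1/2,2)$. Hence $\|X\|_p\le\|g^2-1\|_p$. Undoing the normalization gives the inequality. The formula \eqref{eq:q6-main-constant} is then just the $\chi^2_1$ density $x^{-1/2}e^{-x/2}/\sqrt{2\pi}$ applied to $|x-1|^p$.

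The main obstacle is the equality case. If $M$ has rank one, equality is immediate, since $X=\pm(g^2-1)$. For the converse, suppose rank $\ge2$. Then at least two $\lambda_i$ are nonzero, and with $\sum\lambda_i^2=1$ this forces $|s|=|\sum\lambda_i^3|\le\max|\lambda_i|\sum\lambda_i^2<1$. It then suffices that one of the two inequalities is strict. I would try first to use strictness in \cref{thm:q6-scalar-all-high}: that $\E|Y_s|^p<\E|g^2-1|^p$ for $|s|<1$, which I expect is part of its statement or can be read off its proof by strict monotonicity. If only the non-strict version were available, I would instead extract strictness from the Gamma comparison. Since $f''=p(p-1)|x|^{p-2}$ is strictly convex for $p>3$, the chord estimate \eqref{eq:q6-convexity-sum} should be strict whenever the coefficient vector is not already of one-parameter Gamma-difference form. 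The remaining subcase, where $X$ itself is such a Gamma difference with $|s|<1$, would again fall back on the scalar theorem's strictness.
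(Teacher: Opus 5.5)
Your treatment of the inequality, and of the equality case for $p>4$, matches the paper's. You use the envelope of Corollary~\ref{cor:q6-matrix-envelope}, the scalar comparison, and the symmetry $Z_{-\delta}\stackrel d=-Z_\delta$. You note that rank at least two forces $|\delta|<1$, and the strict part of \cref{thm:q6-scalar-all-high} finishes.

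\textbf{The gap is the endpoint $p=4$.} There the scalar comparison gives no strictness at all. By \cref{thm:gamma-phase-comparison}(1), $\E|Z_\delta|^4=60$ for every $\delta$, and the strict part of \cref{thm:q6-scalar-all-high} is stated only for $p>4$. So your primary route cannot exclude equality for a rank-two matrix at $p=4$. Your ``remaining subcase'' cannot be rescued by scalar strictness either. It is also vacuous: a finite weighted sum of centered $\chi^2_1$ variables is never distributed as $Z_\delta$ with $|\delta|<1$.

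Your fallback, strictness of the chord estimate \eqref{eq:q6-convexity-sum}, is only asserted. Strict convexity of $f''$ does give $\E f(Y_{0,\varepsilon})<\E f(Y_{1,\varepsilon})$ at each truncation level. But a strict inequality need not survive the limit $\varepsilon\downarrow0$ unless you bound the gap below uniformly in $\varepsilon$, and you do not address this.

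The paper avoids the issue by a direct computation, \cref{lem:q6-endpoints}: $\E Q_a^4=12+48\sum_i a_i^4$, which is $<60$ unless exactly one $a_i$ is nonzero. Your fallback can also be made rigorous at $p=4$. For $f(x)=x^4$, the difference between the right and left sides of \eqref{eq:q6-convexity-sum} is exactly $12s^2R^2(1-\sum_i a_i^4)$, independent of $y$. Hence the integrated gap equals $2(1-\sum_i a_i^4)\,\E[\mathbf 1_{\{R>\varepsilon\}}R^2]$, which tends to $48(1-\sum_i a_i^4)>0$ and reproduces the lemma. Either way, $p=4$ needs a separate argument that your proposal does not supply.
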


The remainder of the paper is organized as follows.
Section~\ref{sec:q6-gamma-envelope} diagonalizes the quadratic form, derives a common
representation of the moment generating functions, and proves the Gamma
envelope by a chord comparison and a compound-Poisson interpolation, while
Section~\ref{sec:q6-scalar-comparison} studies the resulting one-parameter family,
proves the comparison for $4<p\le6$ by direct integral calculations, extends
it to every larger exponent by two recurrences, and concludes with the proof
of \cref{thm:q6-main}.

\section{Gamma envelope}\label{sec:q6-gamma-envelope}

Let $G$ be a standard Gaussian vector in $\R^n$ and let $M$ be a nonzero
real symmetric matrix.  Let $d_1,\ldots,d_n$ be the eigenvalues of $M$.
By the spectral theorem and the rotational invariance of $G$, if
$g_1,\ldots,g_n$ are independent standard Gaussian variables, then
\[
 G^{\mathsf T}MG-\operatorname{tr}M
 \stackrel d=\sum\limits_{i=1}^n d_i(g_i^2-1).
\]
Let $a_i=d_i/\|M\|_{\mathrm F}$ and
$Q_a=\sum\limits_{i=1}^n a_i(g_i^2-1)$.  Then
$\sum\limits_{i=1}^n a_i^2=1$ and
\begin{equation}\label{eq:q6-diagonal-reduction}
 \frac{G^{\mathsf T}MG-\operatorname{tr}M}{\|M\|_{\mathrm F}}
 \stackrel d=Q_a.
\end{equation}

Let $\delta=\sum\limits_{i=1}^n a_i^3$.  Since
$|\sum\limits_{i=1}^n a_i^3|\le\sum\limits_{i=1}^n|a_i|^3
\le\sum\limits_{i=1}^n a_i^2=1$, we have $-1\le\delta\le1$.
We use the shape--rate parameterization for Gamma variables.  Let $A$ and
$B$ be independent, with
$A\sim\operatorname{Gamma}(\frac14(1+\delta),\frac12)$ and
$B\sim\operatorname{Gamma}(\frac14(1-\delta),\frac12)$, and let
$Z_\delta=A-B-\delta$.  A Gamma variable with shape zero is understood to
be zero.  In particular, $Z_1\stackrel d=g^2-1$ and
$Z_{-1}\stackrel d=1-g^2$.

The next theorem gives the reason for this choice.  The variable $Z_\delta$
keeps the first three centered moments of $Q_a$, and replacing $Q_a$ by
$Z_\delta$ does not decrease the expectation of any test function considered
in the theorem.

\begin{theorem}\label{thm:q6-gamma-reduction}
Let $f\in C^2(\R)$ and suppose that $f''$ is convex.  Suppose also that
there are constants $C>0$ and $m>0$ such that, for all $x\in\R$,
\[
 |f(x)|+|f'(x)|+|f''(x)|\le C(1+|x|^m).
\]
Then $\E f(Q_a)\le\E f(Z_\delta)$.  Moreover,
\[
 \E Q_a=\E Z_\delta=0,\qquad
 \E Q_a^2=\E Z_\delta^2=2,\qquad
 \E Q_a^3=\E Z_\delta^3=8\delta.
\]
In particular, for every $p\ge3$,
$\E|Q_a|^p\le\E|Z_\delta|^p$.
\end{theorem}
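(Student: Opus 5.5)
The plan is to view both $Q_a$ and $Z_\delta$ as centered infinitely divisible laws with no Gaussian part. I will compare their Lévy measures after weighting by $y^2$, and then pass from the Lévy measures to the laws by interpolation.

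\emph{Step 1 (Lévy representation).} A variable $\operatorname{Gamma}(k,\lambda)$ has Lévy density $k\,y^{-1}e^{-\lambda y}$ on $(0,\infty)$. Hence each $a_i(g_i^2-1)$ is a centered pure-jump law with Lévy density $\tfrac12 y^{-1}e^{-y/(2|a_i|)}$, supported on the half-line of sign $a_i$. I introduce the finite measures $\mu(\mathrm dy)=y^2\nu(\mathrm dy)$. Let $W$ have density $\tfrac18 w e^{-w/2}$ on $(0,\infty)$, so that $\E W=4$. For the summand $i$ the weighted measure is then $2a_i^2\,\mathcal L(a_iW)$, so
\[
\mu_Q=\sum_i 2a_i^2\,\mathcal L(a_iW).
\]
Similarly, $A$ contributes $(1+\delta)\mathcal L(W)$ and $-B$ contributes $(1-\delta)\mathcal L(-W)$, so $\mu_Z=(1+\delta)\mathcal L(W)+(1-\delta)\mathcal L(-W)$.

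Both measures have total mass $2$ and first moment $8\delta$. The cumulants are $\kappa_2=\mu(\R)$ and $\kappa_3=\int y\,\mu(\mathrm dy)$, and both laws are centered. This gives the stated equalities of the first three moments directly.

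\emph{Step 2 (chord estimate).} For $x\in\R$ set
\[
\psi_x(y)=\frac{f(x+y)-f(x)-yf'(x)}{y^2}=\int_0^1(1-u)f''(x+uy)\dd u.
\]
This function is convex in $y$, being an average of convex functions. For $|a|\le1$ and $w>0$ we have $aw=\tfrac{1+a}2w+\tfrac{1-a}2(-w)$, so convexity gives
\[
\psi_x(aw)\le\tfrac{1+a}2\psi_x(w)+\tfrac{1-a}2\psi_x(-w).
\]
I multiply by $2a_i^2$, substitute $w=W$, take expectations and sum over $i$. Using $\sum a_i^2=1$ and $\sum a_i^3=\delta$, this yields
\[
\int\psi_x\,\mathrm d\mu_Q\le\int\psi_x\,\mathrm d\mu_Z\qquad\text{for every }x.
\]
Equivalently, $\int\bigl(f(x+y)-f(x)-yf'(x)\bigr)(\nu_Z-\nu_Q)(\mathrm dy)\ge0$ for every $x$.

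\emph{Step 3 (interpolation).} For $s\in[0,1]$ let $X_s$ be the centered pure-jump infinitely divisible law with Lévy measure $(1-s)\nu_Q+s\nu_Z$. I realize it as $X_s\stackrel d=Y_{1-s}+Y'_s$, where $Y,Y'$ are independent centered Lévy processes with Lévy measures $\nu_Q,\nu_Z$. Differentiating in $s$ via the generator, I expect
\[
\frac{\mathrm d}{\mathrm ds}\E f(X_s)=\E\int\bigl(f(X_s+y)-f(X_s)-yf'(X_s)\bigr)(\nu_Z-\nu_Q)(\mathrm dy),
\]
which is nonnegative by Step 2, so $\E f(Q_a)=\E f(X_0)\le\E f(X_1)=\E f(Z_\delta)$.

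This justification is the main technical obstacle. The Lévy measures are infinite near $0$, but $\mu$ is finite, so the Taylor remainder is controlled by $y^2\sup|f''|$ on a neighbourhood. The measures have exponential tails, so all moments of $X_s$ are finite uniformly in $s$, and the polynomial growth of $f,f',f''$ gives dominated convergence. If the generator computation proves awkward, I would first prove the inequality for smooth compactly supported test functions, or for exponentials $e^{ty}$ via the cumulant formula, and then extend by approximation.

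\emph{Step 4 (moments).} For $p\ge3$, $f(x)=|x|^p$ lies in $C^2$ with $f''(x)=p(p-1)|x|^{p-2}$, which is convex since $p-2\ge1$, and $f$ has polynomial growth. Applying the comparison gives $\E|Q_a|^p\le\E|Z_\delta|^p$.
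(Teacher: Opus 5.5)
Your proposal is correct and follows essentially the paper's route. It uses the same $y^2$-weighted jump measures: your $W$ is the paper's $R\sim\operatorname{Gamma}(2,\frac12)$, whose density is $\frac14we^{-w/2}$, not $\frac18we^{-w/2}$. It uses the same chord estimate from convexity of $f''$, and the same interpolation of jump measures whose derivative is the Taylor remainder integrated against $\nu_Z-\nu_Q$. The difference is only in how the interpolation is made rigorous: the paper truncates jumps below $\varepsilon$, differentiates the compound-Poisson laws with the Poisson perturbation formula for bounded cutoffs $f_N=f\chi(\cdot/N)$, and then lets $N\to\infty$ and $\varepsilon\downarrow0$; you instead differentiate the L\'evy semigroups of $Y_{1-s}+Y'_s$ directly.

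Drop your fallbacks, because neither works. A compactly supported $f$ with convex $f''$ is identically zero, so proving the inequality for such $f$ is vacuous. A comparison for exponentials does not pass by approximation to test functions such as $(\max\{x-u,0\})^3$: limits of positive combinations of exponentials and cubics have nonnegative even derivatives of every order $\ge4$, which $(\max\{x-u,0\})^3$ does not. Any cutoff must therefore be applied to the derivative identity, with convexity of $f''$ invoked only after the cutoff is removed, as the paper does.
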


\begin{proof}
We first write the two MGFs in a common form.  This calculation identifies
the two endpoints of the interpolation used below.  Let
$R\sim\operatorname{Gamma}(2,\frac12)$, so $R$ has density
$x\exp(-x/2)/4$ on $(0,\infty)$.  For $|t|<1/2$, differentiation under the
expectation gives
\begin{align*}
 \frac{\dd}{\dd t}
 2\E\left[\frac{\exp(tR)-1-tR}{R^2}\right]
 &=2\E\left[\frac{\exp(tR)-1}{R}\right]\\
 &=\frac12\int_0^\infty(\exp(tx)-1)\exp(-x/2)\dd x\\
 &=\frac1{1-2t}-1\\
 &=\frac{\dd}{\dd t}\left[-t-\frac12\log(1-2t)\right].
\end{align*}
Both expressions inside the derivatives vanish at $t=0$.  Integrating
from $0$ to $t$ gives
\begin{equation}\label{eq:q6-basic-mgf-integral}
 -t-\frac12\log(1-2t)
 =2\E\left[\frac{\exp(tR)-1-tR}{R^2}\right].
\end{equation}

For $|t|<1/2$, we have
\begin{align}
 \E[\exp(tQ_a)]
 &=\E\left[\prod\limits_{i=1}^n
   \exp\bigl(ta_i(g_i^2-1)\bigr)\right]\notag\\
 &=\prod\limits_{i=1}^n
   \int_{-\infty}^{\infty}
   \exp\bigl(ta_i(x^2-1)\bigr)
   \frac1{\sqrt{2\pi}}\exp(-x^2/2)\dd x\notag\\
 &=\prod\limits_{i=1}^n \exp(-ta_i)
   \int_{-\infty}^{\infty}
   \frac1{\sqrt{2\pi}}\exp\left(-\frac{1}{2}(1-2ta_i)x^2\right)\dd x\notag\\
 &=\prod\limits_{i=1}^n \exp(-ta_i)(1-2ta_i)^{-1/2}\notag\\
 &=\exp\left\{\sum\limits_{i=1}^n
   \left[-ta_i-\frac12\log(1-2ta_i)\right]\right\}\notag\\
 &=\exp\left\{2\sum\limits_{i=1}^n
   \E\left[\frac{\exp(ta_iR)-1-ta_iR}{R^2}\right]\right\},
   \label{eq:q6-Qa-mgf}
\end{align}
\begin{align}
 \E[\exp(tZ_\delta)]
 &=\exp(-\delta t)\E[\exp(tA)]\E[\exp(-tB)]\notag\\
 &=\exp(-\delta t)(1-2t)^{-\frac14(1+\delta)}
   (1+2t)^{-\frac14(1-\delta)}\notag\\
 &=\exp\left\{\frac{1}{2}(1+\delta)
 \left[-t-\frac12\log(1-2t)\right]
 +\frac{1}{2}(1-\delta)
 \left[t-\frac12\log(1+2t)\right]\right\}\notag\\
 &=\exp\left\{(1+\delta)\E\left[
   \frac{\exp(tR)-1-tR}{R^2}\right]
 +(1-\delta)\E\left[
   \frac{\exp(-tR)-1+tR}{R^2}\right]\right\}.
   \label{eq:q6-Zdelta-mgf}
\end{align}
The final equality in \eqref{eq:q6-Qa-mgf} follows from
\eqref{eq:q6-basic-mgf-integral} with $t$ replaced by $ta_i$, followed by
summation from $i=1$ to $n$.  The final equality in
\eqref{eq:q6-Zdelta-mgf} follows from \eqref{eq:q6-basic-mgf-integral}
applied at $t$ and at $-t$.

Equations~\eqref{eq:q6-Qa-mgf} and \eqref{eq:q6-Zdelta-mgf} have the same
form.  Their exponents are integrals of
$\exp(tx)-1-tx$, with jump sizes $a_iR$ in the first formula and the two
endpoint jump sizes $R$ and $-R$ in the second.  The measures represented
by these formulas have infinite mass near zero.  We therefore remove the
values $R\le\varepsilon$, which leaves finite measures.  We shall construct
a centered compound-Poisson variable from each truncated measure,
interpolate between the two variables, and prove that the expectation of
$f$ cannot decrease along the interpolation.  At the end of the proof we
let $\varepsilon\downarrow0$ to recover $Q_a$ and $Z_\delta$.

For $\varepsilon>0$, define two finite measures as follows.  For every
bounded measurable function $h$,
\begin{align*}
 \int_{-\infty}^{\infty}h(x)\nu_{0,\varepsilon}(\dd x)
 &=2\E\left[\frac{\mathbf1_{\{R>\varepsilon\}}}{R^2}
 \sum\limits_{\substack{1\le i\le n\\a_i\ne0}}h(a_iR)\right],\\
 \int_{-\infty}^{\infty}h(x)\nu_{1,\varepsilon}(\dd x)
 &=2\E\left[\frac{\mathbf1_{\{R>\varepsilon\}}}{R^2}
 \left(\frac{1}{2}(1+\delta)h(R)
 +\frac{1}{2}(1-\delta)h(-R)\right)\right].
\end{align*}
Taking $h(x)=1$ and using $R^{-2}\le\varepsilon^{-2}$ on
$\{R>\varepsilon\}$ gives
\begin{align*}
 \nu_{0,\varepsilon}(\R)
 &=2\E\left[\frac{\mathbf1_{\{R>\varepsilon\}}}{R^2}
 \sum\limits_{\substack{1\le i\le n\\a_i\ne0}}1\right]
 \le \frac{2n}{\varepsilon^2},\\
 \nu_{1,\varepsilon}(\R)
 &=2\E\left[\frac{\mathbf1_{\{R>\varepsilon\}}}{R^2}
 \left(\frac{1}{2}(1+\delta)+\frac{1}{2}(1-\delta)\right)\right]
 =2\E\left[\frac{\mathbf1_{\{R>\varepsilon\}}}{R^2}\right]
 \le \frac{2}{\varepsilon^2}.
\end{align*}
Using bounded monotone approximations to $h(x)=|x|$ and
$R^{-1}\le\varepsilon^{-1}$, the monotone convergence theorem gives
\[
 \int_{-\infty}^{\infty}|x|\nu_{0,\varepsilon}(\dd x)
 \le \frac{2}{\varepsilon}\sum\limits_{i=1}^n|a_i|,
 \qquad
 \int_{-\infty}^{\infty}|x|\nu_{1,\varepsilon}(\dd x)
 \le \frac{2}{\varepsilon}.
\]
Thus both endpoint measures are finite and have finite first moments.
Because at least one $a_i$ is nonzero and
$\mathbb{P}(R>\varepsilon)>0$, both total masses are positive.
The first measure describes the truncated jumps $a_iR$ in
\eqref{eq:q6-Qa-mgf}, and the second describes the truncated jumps $R$ and
$-R$ in \eqref{eq:q6-Zdelta-mgf}.  For $0\le r\le1$, let
$\nu_{r,\varepsilon}=(1-r)\nu_{0,\varepsilon}
+r\nu_{1,\varepsilon}$.

We now give the construction of the corresponding centered
compound-Poisson variable.  Fix $r$ and $\varepsilon$.  Let $K$ be a
Poisson variable with mean $\nu_{r,\varepsilon}(\R)$, and let
$J_1,J_2,\ldots$ be independent variables with common distribution
$\frac{\nu_{r,\varepsilon}}{\nu_{r,\varepsilon}(\R)}$, independent of
$K$.  Let
\[
 Y_{r,\varepsilon}
 =\sum\limits_{j=1}^KJ_j
 -\int_{-\infty}^{\infty}x\nu_{r,\varepsilon}(\dd x).
\]
The centering follows from
\begin{align*}
 \E\left[\sum\limits_{j=1}^KJ_j\right]
 &=\E[K]\,\E[J_1]
 =\nu_{r,\varepsilon}(\R)
   \frac{\displaystyle\int_{-\infty}^{\infty}
   x\nu_{r,\varepsilon}(\dd x)}{\nu_{r,\varepsilon}(\R)}
 =\int_{-\infty}^{\infty}x\nu_{r,\varepsilon}(\dd x).
\end{align*}
Thus $\E Y_{r,\varepsilon}=0$.  For $|t|<1/2$,
using $\E[s^K]=\exp\{\nu_{r,\varepsilon}(\R)(s-1)\}$ gives
\begin{align}
 \E[\exp(tY_{r,\varepsilon})]
 &=\exp\left\{-t\int_{-\infty}^{\infty}
      x\nu_{r,\varepsilon}(\dd x)\right\}
   \E\left[\exp\left(t\sum\limits_{j=1}^KJ_j\right)\right]\notag\\
 &=\exp\left\{-t\int_{-\infty}^{\infty}
      x\nu_{r,\varepsilon}(\dd x)\right\}
   \E\left[\E\left[\left.
     \exp\left(t\sum\limits_{j=1}^KJ_j\right)\right|K\right]\right]\notag\\
 &=\exp\left\{-t\int_{-\infty}^{\infty}
      x\nu_{r,\varepsilon}(\dd x)\right\}
   \E\left[\bigl(\E[\exp(tJ_1)]\bigr)^K\right]\notag\\
 &=\exp\left\{-t\int_{-\infty}^{\infty}
      x\nu_{r,\varepsilon}(\dd x)
      +\int_{-\infty}^{\infty}(\exp(tx)-1)
       \nu_{r,\varepsilon}(\dd x)\right\}\notag\\
 &=\exp\left\{\int_{-\infty}^{\infty}
      (\exp(tx)-1-tx)\nu_{r,\varepsilon}(\dd x)\right\}.
      \label{eq:q6-truncated-mgf}
\end{align}

The next estimate gives a uniform exponential-moment bound for
$Y_{r,\varepsilon}$.  We use \eqref{eq:q6-truncated-mgf} at $t=1/4$ and
$t=-1/4$.
For either choice of sign,
$\exp(\pm x/4)-1\mp x/4\ge0$ for all $x\in\R$.  Removing the factor
$\mathbf1_{\{R>\varepsilon\}}$ from the definition of
$\nu_{r,\varepsilon}$ can therefore only increase the exponent in
\eqref{eq:q6-truncated-mgf}.  Hence,
\begin{align*}
 \E[\exp(\pm Y_{r,\varepsilon}/4)]
 &=\exp\left\{\int_{-\infty}^{\infty}
   (\exp(\pm x/4)-1\mp x/4)
   \nu_{r,\varepsilon}(\dd x)\right\}\\
 &\le \exp\left\{(1-r)\log\E[\exp(\pm Q_a/4)]
       +r\log\E[\exp(\pm Z_\delta/4)]\right\}\\
 &\le\max\{\E[\exp(\pm Q_a/4)],
                 \E[\exp(\pm Z_\delta/4)]\}.
\end{align*}
Since $\exp(|y|/4)\le\exp(y/4)+\exp(-y/4)$, it follows that
\begin{equation}\label{eq:q6-uniform-exponential-moment}
 \sup_{0\le r\le1,\ \varepsilon>0}
 \E[\exp(|Y_{r,\varepsilon}|/4)]<\infty.
\end{equation}

We now compare the expectations at the two endpoints.  Choose
$\chi\in C_c^\infty(\R)$ such that
$0\le\chi\le1$, $\chi=1$ on $[-1,1]$, and $\chi=0$ outside $[-2,2]$.
For $N\ge1$, let $f_N(x)=f(x)\chi(x/N)$.  Then
\[
 f_N''(y)=f''(y)\chi(y/N)
 +\frac2Nf'(y)\chi'(y/N)+\frac1{N^2}f(y)\chi''(y/N).
\]
For every fixed $y\in\R$ and every $N>|y|$, $f_N''(y)=f''(y)$.
The polynomial bounds on $f,f'$, and $f''$ therefore give, for a constant
$C_m$ independent of $N$,
$|f_N(y)|+|f_N''(y)|\le C_m(1+|y|^m)$.

Fix $N$.  For fixed $r$ and every $h$ such that $r+h\in[0,1]$,
$\nu_{r+h,\varepsilon}=\nu_{r,\varepsilon}
+h(\nu_{1,\varepsilon}-\nu_{0,\varepsilon})$, and hence
\[
 \int_{-\infty}^{\infty}x\nu_{r+h,\varepsilon}(\dd x)
 =\int_{-\infty}^{\infty}x\nu_{r,\varepsilon}(\dd x)
   +h\int_{-\infty}^{\infty}x
   (\nu_{1,\varepsilon}-\nu_{0,\varepsilon})(\dd x).
\]
Thus a point configuration $\mu$ with intensity
$\nu_{r+h,\varepsilon}$ is evaluated by
\[
 \mu\longmapsto
 f_N\left(\int_{-\infty}^{\infty}x\mu(\dd x)
 -\int_{-\infty}^{\infty}x\nu_{r,\varepsilon}(\dd x)
 -h\int_{-\infty}^{\infty}x
 (\nu_{1,\varepsilon}-\nu_{0,\varepsilon})(\dd x)\right).
\]
The Poisson perturbation formula
\cite[Theorem~19.1]{LastPenrose} differentiates the change of intensity,
and the ordinary chain rule differentiates the $h$-dependent centering term.
The perturbation $\nu_{1,\varepsilon}-\nu_{0,\varepsilon}$ is a finite signed
measure, as allowed in the cited theorem.  Since
$\nu_{r+h,\varepsilon}=(1-r-h)\nu_{0,\varepsilon}
+(r+h)\nu_{1,\varepsilon}$, the perturbed intensity remains positive whenever
$r+h\in[0,1]$.
Because $f_N$ and $f_N'$ are bounded, the two derivatives may be combined
under the expectation.
At $h=0$ they give
\begin{align}
 \frac{\dd}{\dd r}\E f_N(Y_{r,\varepsilon})
 &=\int_{-\infty}^{\infty}
   \E[f_N(Y_{r,\varepsilon}+x)-f_N(Y_{r,\varepsilon})]
   \frac{\partial\nu_{r,\varepsilon}}{\partial r}(\dd x)
   -\E f_N'(Y_{r,\varepsilon})
   \int_{-\infty}^{\infty}x
   \frac{\partial\nu_{r,\varepsilon}}{\partial r}(\dd x)\notag\\
 &=\int_{-\infty}^{\infty}
   \E[f_N(Y_{r,\varepsilon}+x)-f_N(Y_{r,\varepsilon})
   -xf_N'(Y_{r,\varepsilon})]
   \frac{\partial\nu_{r,\varepsilon}}{\partial r}(\dd x).
   \label{eq:q6-poisson-derivative}
\end{align}
At $r=0$ and $r=1$, we use the corresponding one-sided derivatives, as
allowed in \cite[Theorem~19.1]{LastPenrose}.  Taylor's formula gives
\begin{equation}\label{eq:q6-taylor-remainder}
 f_N(y+x)-f_N(y)-xf_N'(y)
 =x^2\int_0^1(1-s)f_N''(y+sx)\dd s.
\end{equation}
Since $\partial\nu_{r,\varepsilon}/\partial r
=\nu_{1,\varepsilon}-\nu_{0,\varepsilon}$, substituting
\eqref{eq:q6-taylor-remainder} into \eqref{eq:q6-poisson-derivative} and
then using the definitions of the two endpoint measures gives
\begin{align}
 \frac{\dd}{\dd r}\E f_N(Y_{r,\varepsilon})
 &=\int_{-\infty}^{\infty}
 \E\left[x^2\int_0^1(1-s)
 f_N''(Y_{r,\varepsilon}+sx)\dd s\right]
 (\nu_{1,\varepsilon}-\nu_{0,\varepsilon})(\dd x)\notag\\
 &=2\E\Biggl[\mathbf1_{\{R>\varepsilon\}}\int_0^1(1-s)
 \Biggl(\frac{1}{2}(1+\delta)f_N''(Y_{r,\varepsilon}+sR)\notag\\
 &\qquad{}+\frac{1}{2}(1-\delta)f_N''(Y_{r,\varepsilon}-sR)
 -\sum\limits_{i=1}^n a_i^2
 f_N''(Y_{r,\varepsilon}+sa_iR)\Biggr)\dd s\Biggr],
 \label{eq:q6-truncated-derivative}
\end{align}
where $R$ is independent of $Y_{r,\varepsilon}$.

We integrate \eqref{eq:q6-truncated-derivative} from $r=0$ to $r=1$ and
then let $N\to\infty$.
For every $j\in\{0,1\}$, $f_N(Y_{j,\varepsilon})\longrightarrow
f(Y_{j,\varepsilon})$ and
$|f_N(Y_{j,\varepsilon})|\le C_m(1+|Y_{j,\varepsilon}|^m)$.
For every $|c|\le1$ and $0\le s\le1$,
$f_N''(Y_{r,\varepsilon}+scR)\longrightarrow
f''(Y_{r,\varepsilon}+scR)$.
Since $-1\le\delta\le1$ and $\sum\limits_{i=1}^n a_i^2=1$, the triangle
inequality and the polynomial bound on $f_N''$ give
\begin{align*}
 {}&2\mathbf1_{\{R>\varepsilon\}}(1-s)
 \left|\frac{1}{2}(1+\delta)f_N''(Y_{r,\varepsilon}+sR)
 +\frac{1}{2}(1-\delta)f_N''(Y_{r,\varepsilon}-sR)
 -\sum\limits_{i=1}^n a_i^2f_N''(Y_{r,\varepsilon}+sa_iR)\right|\\
 \le{}&2\mathbf1_{\{R>\varepsilon\}}(1-s)
 C_m(1+|Y_{r,\varepsilon}|^m+R^m)
 \left(\frac{1}{2}(1+\delta)+\frac{1}{2}(1-\delta)
 +\sum\limits_{i=1}^n a_i^2\right)\\
 ={}&4\mathbf1_{\{R>\varepsilon\}}(1-s)
 C_m(1+|Y_{r,\varepsilon}|^m+R^m)\\
 \le{}&4C_m(1+|Y_{r,\varepsilon}|^m+R^m).
\end{align*}
Equation~\eqref{eq:q6-uniform-exponential-moment} and the Gamma moments give
\[
 \sup_{0\le r\le1,\ \varepsilon>0}
 \E[1+|Y_{r,\varepsilon}|^m+R^m]<\infty.
\]
The dominated convergence theorem therefore applies to the two endpoint
expectations and to the integral of \eqref{eq:q6-truncated-derivative}.  We obtain
\begin{align}
 \E f(Y_{1,\varepsilon})-\E f(Y_{0,\varepsilon})
 &=2\int_0^1\E\Biggl[\mathbf1_{\{R>\varepsilon\}}
   \int_0^1(1-s)
   \Biggl(\frac{1}{2}(1+\delta)f''(Y_{r,\varepsilon}+sR)\notag\\
 &\qquad{}
   +\frac{1}{2}(1-\delta)f''(Y_{r,\varepsilon}-sR)
   -\sum\limits_{i=1}^n a_i^2
    f''(Y_{r,\varepsilon}+sa_iR)\Biggr)\dd s\Biggr]\dd r.
    \label{eq:q6-integrated-derivative}
\end{align}

Since $|a_i|\le1$ for every $1\le i\le n$, the convexity of $f''$ gives,
for every $y\in\R$, $R>0$, and $0\le s\le1$,
\[
 f''(y+sa_iR)
 \le\frac{1}{2}(1+a_i)f''(y+sR)
    +\frac{1}{2}(1-a_i)f''(y-sR).
\]
Therefore,
\begin{align}
 \sum\limits_{i=1}^n a_i^2f''(y+sa_iR)
 &\le \frac{1}{2}\sum\limits_{i=1}^na_i^2(1+a_i)f''(y+sR)
     +\frac{1}{2}\sum\limits_{i=1}^na_i^2(1-a_i)f''(y-sR)\notag\\
 &=\frac{1}{2}\left(\sum\limits_{i=1}^na_i^2
       +\sum\limits_{i=1}^na_i^3\right)f''(y+sR)
   +\frac{1}{2}\left(\sum\limits_{i=1}^na_i^2
       -\sum\limits_{i=1}^na_i^3\right)f''(y-sR)\notag\\
 &=\frac{1}{2}(1+\delta)f''(y+sR)
   +\frac{1}{2}(1-\delta)f''(y-sR).
   \label{eq:q6-convexity-sum}
\end{align}
Substituting \eqref{eq:q6-convexity-sum} into
\eqref{eq:q6-integrated-derivative} gives
\begin{equation}\label{eq:q6-truncated-comparison}
 \E f(Y_{0,\varepsilon})\le\E f(Y_{1,\varepsilon}).
\end{equation}

It remains to let $\varepsilon\downarrow0$.  In passing from
\eqref{eq:q6-truncated-derivative} to
\eqref{eq:q6-truncated-comparison}, we integrated in $r$ and removed the
$N$-cutoff in the test function.  Thus only the two endpoint expectations
remain to be considered in this limit.  Since
$\exp(tx)-1-tx\ge0$, the monotone convergence theorem applied to
\eqref{eq:q6-truncated-mgf} shows that,
at $r=0$ and $r=1$, the MGFs converge for every $|t|<1/2$ to
\eqref{eq:q6-Qa-mgf} and \eqref{eq:q6-Zdelta-mgf},
respectively.  The continuity theorem for MGFs gives
$Y_{0,\varepsilon}\longrightarrow Q_a$ and
$Y_{1,\varepsilon}\longrightarrow Z_\delta$ in distribution.
For every fixed $m$ and all $y\in\R$,
$(1+|y|^m)^2\le C_m\exp(|y|/4)$.  The polynomial bound on $f$ and
\eqref{eq:q6-uniform-exponential-moment} therefore show that
$f(Y_{0,\varepsilon})$ and $f(Y_{1,\varepsilon})$ are uniformly integrable.
Consequently, as $\varepsilon\downarrow0$,
$\E f(Y_{0,\varepsilon})\longrightarrow\E f(Q_a)$ and
$\E f(Y_{1,\varepsilon})\longrightarrow\E f(Z_\delta)$.
Taking the limit $\varepsilon\downarrow0$ in
\eqref{eq:q6-truncated-comparison} gives
$\E f(Q_a)\le\E f(Z_\delta)$.

It remains to verify the moment assertions.  Since
$\E(g_i^2-1)^2=2$ and $\E(g_i^2-1)^3=8$, independence and centering give
\begin{align*}
 \E Q_a&=0,\qquad
 \E Q_a^2=2\sum\limits_{i=1}^na_i^2=2,\qquad
 \E Q_a^3=8\sum\limits_{i=1}^na_i^3=8\delta.
\end{align*}
A Gamma variable with shape $\alpha$ and rate $1/2$ has mean $2\alpha$,
variance $4\alpha$, and third centered moment $16\alpha$.  Since $A$ and
$B$ are independent and $Z_\delta=A-B-\delta$, their shapes give
$\E Z_\delta=0$, $\E Z_\delta^2=2$, and $\E Z_\delta^3=8\delta$.
This proves the three moment identities in the statement.

Finally, for $f(x)=|x|^p$ with $p\ge3$, the function belongs to $C^2(\R)$ and
one has
$f''(x)=p(p-1)|x|^{p-2}$, which is convex.  This proves the final assertion.
\end{proof}

The Gamma envelope also gives a concrete comparison beyond absolute moments.
For every $u\in\R$, the function $x\mapsto(\max\{x-u,0\})^3$ is $C^2$, has
polynomial growth, and has convex second derivative $6\max\{x-u,0\}$.  Hence
\cref{thm:q6-gamma-reduction} also gives
\[
 \E[(\max\{Q_a-u,0\})^3]\le\E[(\max\{Z_\delta-u,0\})^3].
\]
The strict scalar comparison in Section~\ref{sec:q6-scalar-comparison} will also
determine the equality cases in \cref{thm:q6-main}.

\begin{corollary}\label{cor:q6-matrix-envelope}
Let $M\in\R^{n\times n}$ be a nonzero real symmetric matrix, let
$G\sim N(0,I_n)$, and let
$\delta=\frac{\operatorname{tr}(M^3)}{\|M\|_{\mathrm F}^3}$.
Then $|\delta|\le1$.  For every function $f$ satisfying the assumptions
of \cref{thm:q6-gamma-reduction},
\begin{equation}\label{eq:q6-matrix-test-comparison}
 \E f\left(\frac{G^{\mathsf T}MG-\operatorname{tr}M}
                  {\|M\|_{\mathrm F}}\right)
 \le \E f(Z_\delta).
\end{equation}
In particular, for every $p\ge3$,
\begin{equation}\label{eq:q6-matrix-envelope}
 \E\left|G^{\mathsf T}MG-\operatorname{tr}M\right|^p
 \le \|M\|_{\mathrm F}^p\E|Z_\delta|^p.
\end{equation}
The variable $\|M\|_{\mathrm F}Z_\delta$ has the
same mean, variance, and third centered moment as
$G^{\mathsf T}MG-\operatorname{tr}M$.
\end{corollary}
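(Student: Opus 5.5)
The corollary is a translation of \cref{thm:q6-gamma-reduction} back to matrix language, using the diagonal reduction \eqref{eq:q6-diagonal-reduction}. I would first check that the $\delta$ of the corollary coincides with the $\delta$ of Section~\ref{sec:q6-gamma-envelope}. Let $d_1,\ldots,d_n$ be the eigenvalues of $M$ and set $a_i=d_i/\|M\|_{\mathrm F}$. The spectral theorem gives $\operatorname{tr}(M^3)=\sum_i d_i^3$ and $\|M\|_{\mathrm F}^2=\sum_i d_i^2$, so
\[
 \frac{\operatorname{tr}(M^3)}{\|M\|_{\mathrm F}^3}=\sum_{i=1}^n a_i^3 .
\]
This is exactly the parameter used to define $Z_\delta$. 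The bound $|\delta|\le1$ was already shown before \cref{thm:q6-gamma-reduction}: it follows from $\sum_i a_i^2=1$ and $|a_i|\le1$.

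Next, \eqref{eq:q6-diagonal-reduction} says that the normalized form $(G^{\mathsf T}MG-\operatorname{tr}M)/\|M\|_{\mathrm F}$ has the same law as $Q_a$. Hence its $\E f$ equals $\E f(Q_a)$; this expectation is finite by the polynomial growth of $f$ and the finite moments of $Q_a$. \Cref{thm:q6-gamma-reduction} then gives \eqref{eq:q6-matrix-test-comparison}.

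For \eqref{eq:q6-matrix-envelope}, take $f(x)=|x|^p$ with $p\ge3$. The theorem's proof already notes that this $f$ is $C^2$ with convex $f''$, and $f$, $f'$, $f''$ have polynomial growth. Multiplying the resulting inequality by $\|M\|_{\mathrm F}^p$ gives the claim.

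For the moment matching, \cref{thm:q6-gamma-reduction} gives $\E Z_\delta=0$, $\E Z_\delta^2=2$ and $\E Z_\delta^3=8\delta$, and the same values for $Q_a$. Scaling by $\|M\|_{\mathrm F}$ multiplies the $k$th centered moment by $\|M\|_{\mathrm F}^k$. Since $\E Q_a=0$ and \eqref{eq:q6-diagonal-reduction} holds, $G^{\mathsf T}MG-\operatorname{tr}M$ has the same law as $\|M\|_{\mathrm F}Q_a$, so it shares mean, variance and third centered moment with $\|M\|_{\mathrm F}Z_\delta$. Equivalently, $\operatorname{Var}=2\|M\|_{\mathrm F}^2$ and third moment $8\operatorname{tr}(M^3)$.

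There is no real obstacle here. The only point needing care is identifying $\sum_i a_i^3$ with $\operatorname{tr}(M^3)/\|M\|_{\mathrm F}^3$, which again uses orthogonal diagonalization of $M$.
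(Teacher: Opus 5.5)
Your proposal is correct and follows essentially the same route as the paper's proof. You identify $\delta$ with $\sum_i a_i^3$ via the eigenvalues of $M$, apply \cref{thm:q6-gamma-reduction} through \eqref{eq:q6-diagonal-reduction}, specialize to $f(x)=|x|^p$, and rescale the first three moments by powers of $\|M\|_{\mathrm F}$.
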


\begin{proof}
Let $d_1,\ldots,d_n$ be the eigenvalues of $M$ and let
$a_i=d_i/\|M\|_{\mathrm F}$.  The identities
$\sum\limits_{i=1}^n d_i^2=\|M\|_{\mathrm F}^2$ and
$\sum\limits_{i=1}^n d_i^3=\operatorname{tr}(M^3)$ give
$\sum\limits_{i=1}^n a_i^2=1$ and
$\sum\limits_{i=1}^n a_i^3=\delta$.  Hence
$|\delta|=|\sum\limits_{i=1}^n a_i^3|
\le\sum\limits_{i=1}^n|a_i|^3
\le\sum\limits_{i=1}^n a_i^2=1$.
Thus the coefficient vector in \eqref{eq:q6-diagonal-reduction} is
normalized, and its parameter in \cref{thm:q6-gamma-reduction} is exactly
$\delta$.  Applying that theorem to \eqref{eq:q6-diagonal-reduction} gives
\[
 \E f\left(\frac{G^{\mathsf T}MG-\operatorname{tr}M}
                  {\|M\|_{\mathrm F}}\right)
 =\E f(Q_a)
 \le \E f(Z_\delta).
\]

For $p\ge3$, applying \eqref{eq:q6-matrix-test-comparison} to
$f(x)=|x|^p$ gives
\[
 \frac{\E\left|G^{\mathsf T}MG-\operatorname{tr}M\right|^p}
 {\|M\|_{\mathrm F}^p}
 =\E|Q_a|^p\le\E|Z_\delta|^p,
\]
which is \eqref{eq:q6-matrix-envelope}.

Finally, \cref{thm:q6-gamma-reduction} gives the same first three centered
moments for $Q_a$ and $Z_\delta$.  Using
\eqref{eq:q6-diagonal-reduction} and multiplying the moments of orders one,
two, and three by the corresponding powers of $\|M\|_{\mathrm F}$ shows
that $G^{\mathsf T}MG-\operatorname{tr}M$ and
$\|M\|_{\mathrm F}Z_\delta$ have the same first three centered moments.
Their common third centered moment is
$8\|M\|_{\mathrm F}^3\delta=8\operatorname{tr}(M^3)$.
\end{proof}

\section{Scalar comparison}\label{sec:q6-scalar-comparison}

It remains to compare $Z_\delta$ with the endpoint $g^2-1$.  Since
$Z_{-\delta}\stackrel d=-Z_\delta$, its absolute moments depend on $\delta$
only through $|\delta|$.  We first handle
$4\le p\le6$ by direct calculation and then derive recurrences for all
larger exponents.

\begin{lemma}\label{lem:q6-endpoints}
Let $g_1,\ldots,g_n$ be independent standard Gaussian variables, and let
$g$ be a standard Gaussian variable.  If
$\sum\limits_{i=1}^n a_i^2=1$, then
\[
 \E\left(\sum\limits_{i=1}^n a_i(g_i^2-1)\right)^4
 \le60=\E(g^2-1)^4.
\]
Equality holds if and only if, for some $1\le k\le n$, one has
$|a_k|=1$ and $a_i=0$ for every $i\ne k$.
\end{lemma}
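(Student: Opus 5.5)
Compute the fourth moment exactly by expanding the fourth power and using independence together with centering. Write $X_i=g_i^2-1$, so that $\E X_i=0$, $\E X_i^2=2$, and $\E X_i^4=60$. The last value follows from $\E g^8=105$, $\E g^6=15$, $\E g^4=3$, $\E g^2=1$:
\[
 \E(g^2-1)^4=105-4\cdot15+6\cdot3-4\cdot1+1=60 .
\]
In the expansion of $\E\bigl(\sum_i a_iX_i\bigr)^4$, every monomial containing some $X_i$ to the first power vanishes. Only the diagonal terms $a_i^4\E X_i^4$ and the pairings $a_i^2a_j^2\E X_i^2\E X_j^2$ with $i\ne j$ survive, the latter with multinomial weight $3$ per ordered pair. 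This gives
\[
 \E\Bigl(\sum_{i=1}^n a_iX_i\Bigr)^4
 =60\sum_{i}a_i^4+12\sum_{i\ne j}a_i^2a_j^2
 =60\sum_i a_i^4+12\Bigl(1-\sum_i a_i^4\Bigr)
 =12+48\sum_i a_i^4 ,
\]
where we used $\sum_{i\ne j}a_i^2a_j^2=(\sum_i a_i^2)^2-\sum_i a_i^4$. Equivalently, this is the cumulant identity $\E Q_a^4=3(\E Q_a^2)^2+\kappa_4$ with $\kappa_4=48\sum_i a_i^4$, which one could also read off from the logarithm of the MGF~\eqref{eq:q6-Qa-mgf}.

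The inequality now reduces to $\sum_i a_i^4\le 1$ under $\sum_i a_i^2=1$. This holds because $a_i^4\le a_i^2$ whenever $|a_i|\le1$. The bound is then $12+48=60$.

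For the equality case, equality forces $a_i^4=a_i^2$ for every $i$, so each $a_i\in\{0,\pm1\}$. The normalization $\sum_i a_i^2=1$ then allows exactly one nonzero coordinate, with $|a_k|=1$. Conversely, such a vector gives $Q_a=\pm X_k$, whose fourth moment is $60$.

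The only step needing care is the combinatorial coefficient in the expansion: the number of ways to split four positions into two pairs assigned to a given ordered pair $(i,j)$ with $i\ne j$ is $\binom42/2=3$. As a sanity check, the resulting formula gives $12+48=60$ in the rank-one case, and $12$ (the Gaussian value $3\cdot2^2$) as $\max_i|a_i|\to0$.
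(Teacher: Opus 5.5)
Your proposal is correct and follows essentially the same route as the paper. Both expand the fourth power using independence and centering to get $12+48\sum_i a_i^4$, then bound this by $60$ under $\sum_i a_i^2=1$. The only cosmetic difference is in the equality case: you use the termwise bound $a_i^4\le a_i^2$, whereas the paper reads equality off the vanishing of the cross terms $a_i^2a_j^2$, and these are equivalent.
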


\begin{proof}
The standard fourth-moment expansion
for independent centered $\chi_1^2$ variables gives
\begin{align*}
 \E\left(\sum\limits_{i=1}^n a_i(g_i^2-1)\right)^4
 &=60\sum\limits_{i=1}^n a_i^4
   +24\sum\limits_{1\le i<j\le n}a_i^2a_j^2\\
 &=12\left(\sum\limits_{i=1}^n a_i^2\right)^2
   +48\sum\limits_{i=1}^n a_i^4\\
 &=12+48\sum\limits_{i=1}^n a_i^4\\
 &\le60.
\end{align*}
Equality holds precisely when $a_i^2a_j^2=0$ for every
$1\le i<j\le n$.  Since the squares sum to one, this is equivalent to
$|a_k|=1$ for one index $k$ and $a_i=0$ for every $i\ne k$.
\end{proof}

\begin{theorem}
\label{thm:gamma-phase-comparison}
For every $0\le\delta\le1$, the following statements hold.
\begin{enumerate}
\item The fourth moment satisfies
\[
 \E|Z_\delta|^4=\E|g^2-1|^4=60.
\]
\item For every $4<p<6$,
\[
 \E|Z_\delta|^p\le\E|g^2-1|^p.
\]
Equality holds if and only if $\delta=1$.
\item The sixth moment satisfies
\[
 \E|Z_\delta|^6=5400+640\delta^2\le6040=\E|g^2-1|^6.
\]
\end{enumerate}
\end{theorem}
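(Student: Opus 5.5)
The plan is to work entirely with cumulants and characteristic functions, since $Z_\delta=A-B-\delta$ has an explicit transform. A $\operatorname{Gamma}(\alpha,\frac12)$ variable has $k$th cumulant $\alpha(k-1)!\,2^k$. Adding the contribution of $-B$ with sign $(-1)^k$ gives, for $k\ge2$,
\[
 \kappa_k(Z_\delta)=\tfrac14(k-1)!\,2^k\bigl[(1+\delta)+(-1)^k(1-\delta)\bigr].
\]
So $\kappa_k=(k-1)!\,2^{k-1}$ for even $k$ and $\kappa_k=(k-1)!\,2^{k-1}\delta$ for odd $k\ge3$. This yields $\kappa_2=2$, $\kappa_3=8\delta$, $\kappa_4=48$ and $\kappa_6=3840$. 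The moment–cumulant formulas then give parts 1 and 3 directly:
\[
 \E Z_\delta^4=\kappa_4+3\kappa_2^2=60,\qquad
 \E Z_\delta^6=\kappa_6+15\kappa_4\kappa_2+10\kappa_3^2+15\kappa_2^3=5400+640\delta^2 .
\]
The value $6040$ is the case $\delta=1$, and \cref{lem:q6-endpoints} already records $60=\E(g^2-1)^4$.

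For part 2, fix $4<p<6$ and use the Fourier representation of $|x|^p$ with the Taylor terms up to order four removed:
\[
 |x|^p=c_p\int_0^\infty\Bigl(\cos(tx)-1+\tfrac{t^2x^2}{2}-\tfrac{t^4x^4}{24}\Bigr)t^{-p-1}\dd t .
\]
This integral converges exactly for $4<p<6$. Its integrand is nonpositive, because $\cos u-1+u^2/2-u^4/24\le0$, so $c_p<0$. Taking expectations (Fubini is justified by the same bracket bound and the finite sixth moments) gives
\[
 \E|Z_\delta|^p-\E|Z_1|^p=c_p\int_0^\infty\bigl(\operatorname{Re}\varphi_\delta(t)-\operatorname{Re}\varphi_1(t)\bigr)t^{-p-1}\dd t,
\]
because the second and fourth moments agree for all $\delta$ by part 1. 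Here $\varphi_\delta$ is the characteristic function of $Z_\delta$:
\[
 \varphi_\delta(t)=e^{-i\delta t}(1-2it)^{-(1+\delta)/4}(1+2it)^{-(1-\delta)/4}
 =(1+4t^2)^{-1/4}\exp\{i\delta\psi(t)\},
 \qquad \psi(t)=\tfrac12\arctan(2t)-t .
\]
Hence $\operatorname{Re}\varphi_\delta(t)-\operatorname{Re}\varphi_1(t)=(1+4t^2)^{-1/4}\bigl[\cos(\delta\psi(t))-\cos\psi(t)\bigr]$. Since $c_p<0$, the claim reduces to showing that
\[
 I(\delta)=\int_0^\infty(1+4t^2)^{-1/4}\bigl[\cos(\delta\psi)-\cos\psi\bigr]t^{-p-1}\dd t
\]
is nonnegative, with $I(\delta)>0$ for $\delta<1$.

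On the interval $[0,t_*]$, where $t_*$ is defined by $|\psi(t_*)|=\pi$ (numerically $t_*\approx3.9$), we have $|\delta\psi|\le|\psi|\le\pi$. Since cosine is decreasing on $[0,\pi]$, the integrand is nonnegative there, and strictly positive for $\delta<1$ and $t>0$. For small $t$, $\psi(t)\approx-\tfrac43t^3$, so the integrand is of order $(1-\delta^2)t^{5-p}$, which is integrable near zero.

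The main obstacle is the tail $t>t_*$, where the sign of the integrand is not controlled. There I would bound it crudely by $2(1+4t^2)^{-1/4}t^{-p-1}\le\sqrt2\,t^{-p-3/2}$, whose integral is at most $\sqrt2\,t_*^{-p-1/2}/(p+\frac12)$. This has to be absorbed by the positive contribution from a fixed window such as $t\in[1,3]$. On that window I would lower-bound $\cos(\delta\psi)-\cos\psi$ using $\cos(\delta\psi)-\cos\psi\ge(1-\delta)\,|\psi|\sin(|\psi|)$-type chord estimates, valid because $|\psi|\le\pi$.

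The difficulty is that the gain vanishes as $\delta\to1$, while the crude tail bound does not. So I would first try to bound the tail by $(1-\delta)$ times a constant, using $|\cos(\delta\psi)-\cos\psi|\le(1-\delta)|\psi|\le(1-\delta)t$. This gives a tail of order $(1-\delta)t_*^{1/2-p}$. I would then compare it with an explicit $(1-\delta)$-linear lower bound on $[0,t_*]$, obtained from $\cos(\delta\psi)-\cos\psi\ge(1-\delta)|\psi|\sin|\psi|$, which follows from concavity of $\sin$ on $[0,\pi]$. If these estimates are too lossy near $p=6$, where the weight decays slowest, the fallback is to verify the inequality via the explicit endpoint case and rigorous numerical quadrature on finitely many subintervals.
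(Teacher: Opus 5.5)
Your cumulant computation for parts 1 and 3 is correct. Your reduction of part 2 to the sign of $I(\delta)$ coincides with the paper's (your $\psi$ is the paper's $-b$). The gap is that you never prove $I(\delta)>0$ for $\delta<1$, which is the only nontrivial step of part 2.

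The one concrete head estimate you propose is false. The claim $\cos(\delta x)-\cos x\ge(1-\delta)x\sin x$ on $[0,\pi]$ fails already at $\delta=0$: it would say $1-\cos x\ge x\sin x$. For small $x$ the two sides behave like $x^2/2$ and $x^2$, and at $x=\pi/2$ they equal $1$ and $\pi/2$. Concavity of $\sin$ on $[0,\pi]$ gives $\sin s\ge (s/x)\sin x$ for $0\le s\le x$, and hence only
\[
 \cos(\delta x)-\cos x=\int_{\delta x}^{x}\sin s\dd s\ge\frac{1-\delta^2}{2}\,x\sin x\ge\frac{1-\delta}{2}\,x\sin x .
\]
You also have the critical regime backwards. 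As $p\uparrow6$ the head grows like $(6-p)^{-1}$ while your tail bound decreases in $p$, so the binding case is $p\downarrow4$. Finally, a fallback to numerical quadrature over a two-parameter family $(\delta,p)$ is not a proof. It only becomes one once the $(1-\delta)$-linear comparison is made explicit, and that comparison is exactly the missing estimate.

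With the corrected chord bound, your splitting does close using elementary constants. Cut at $t=\pi$ instead of $t_*$. Then $|\delta\psi|\le|\psi|\le t\le\pi$ makes the integrand nonnegative on $[0,\pi]$, and the tail satisfies
\[
 \int_\pi^\infty(1+4t^2)^{-1/4}\bigl|\cos(\delta\psi)-\cos\psi\bigr|t^{-p-1}\dd t\le(1-\delta)\,\frac{\pi^{1/2-p}}{\sqrt2\,(p-\frac12)}\le0.004\,(1-\delta).
\]
On $[0,\frac12]$, use the following bounds:
\[
 |\psi(t)|\ge\tfrac43t^3-\tfrac{16}{5}t^5\ge\tfrac{8}{15}t^3,\qquad |\psi|\le\tfrac16,\qquad \sin y\ge\tfrac{2}{\pi}y,\qquad (1+4t^2)^{-1/4}\ge2^{-1/4}.
\]
Together with the corrected chord bound, these give a head contribution of at least $\frac{1-\delta}{2}\cdot0.15\cdot\frac{2^{p-6}}{6-p}\ge0.009\,(1-\delta)$ for all $4<p<6$. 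Hence $I(\delta)\ge0.005\,(1-\delta)>0$.

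The paper avoids all such constants. It substitutes $x=b(u)$ and shows that the weight $W_p(x)=m(u)u^{-p-1}/b'(u)$ is completely monotone. By Bernstein's theorem $W_p$ is then a Laplace mixture, so positivity reduces to
\[
 \int_0^\infty e^{-sx}(\cos\delta x-\cos x)\dd x=\frac{s}{s^2+\delta^2}-\frac{s}{s^2+1}>0 .
\]
This treats the oscillating tail structurally and works for every $0<p<6$. The same device is reused for $0<p<2$ and for the signed moments in \cref{thm:q6-scalar-all-high}. Your repaired route is more elementary, since it needs no Bernstein theorem, but it rests on explicit numerical margins specific to $4<p<6$.
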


\begin{proof}
Fix $0\le\delta\le1$.  For $|t|<1/2$, independence and the Gamma MGF give
\[
 \E[\exp(tZ_\delta)]
 =\exp(-\delta t)(1-2t)^{-\frac14(1+\delta)}
 (1+2t)^{-\frac14(1-\delta)}.
\]
\medskip
\noindent\emph{The first six moments.}
For $|t|<1/2$, the power-series expansions of the logarithms give, as
$t\to0$,
\begin{align*}
 \log\E[\exp(tZ_\delta)]
 &=-\delta t-\frac{1}{4}(1+\delta)\log(1-2t)
   -\frac{1}{4}(1-\delta)\log(1+2t)\\
 &=-\delta t
   +\frac{1}{4}(1+\delta)\sum\limits_{j=1}^{6}\frac{(2t)^j}{j}
   +\frac{1}{4}(1-\delta)\sum\limits_{j=1}^{6}
      \frac{(-1)^j(2t)^j}{j}+O(t^7)\\
 &=t^2+\frac{4}{3}\delta t^3+2t^4
  +\frac{16}{5}\delta t^5+\frac{16}{3}t^6+O(t^7).
\end{align*}
Here we used
$\log(1-z)=-\sum\limits_{j=1}^{\infty}z^j/j$ and
$\log(1+z)=\sum\limits_{j=1}^{\infty}(-1)^{j+1}z^j/j$.  The expansion
$\exp(z)=1+z+\frac{1}{2}z^2+\frac{1}{6}z^3+O(z^4)$ gives
\begin{align*}
 \E[\exp(tZ_\delta)]
 &=1+t^2+\frac{4}{3}\delta t^3+\left(2+\frac{1}{2}\right)t^4
   +\left(\frac{16}{5}+\frac{4}{3}\right)\delta t^5
   +\left(\frac{16}{3}+2+\frac{1}{6}+\frac{8}{9}\delta^2\right)t^6+O(t^7)\\
 &=1+t^2+\frac{4}{3}\delta t^3+\frac52t^4
  +\frac{68}{15}\delta t^5
  +\left(\frac{15}{2}+\frac{8}{9}\delta^2\right)t^6+O(t^7).
\end{align*}
Comparing the coefficient of $t^j$ with
$\E[Z_\delta^j]/j!$ gives
\begin{equation}\label{eq:q6-low-moments}
 \E Z_\delta^2=2,\quad \E Z_\delta^3=8\delta,\quad
 \E Z_\delta^4=60,\quad \E Z_\delta^5=544\delta,\quad
 \E Z_\delta^6=5400+640\delta^2.
\end{equation}
The fourth-moment identity in \eqref{eq:q6-low-moments} proves part~(1).
The sixth-moment identity gives
$\E|Z_\delta|^6=5400+640\delta^2\le6040=\E|Z_1|^6$, which proves
part~(3).  It remains to prove part~(2) for $4<p<6$.

\medskip
\noindent\emph{A positive integral.}
To compare the moments of $Z_\delta$ and $Z_1$, we first determine the sign
of the difference between their cosine transforms.
The characteristic function of a Gamma variable with shape $r$ and rate
$1/2$ is $(1-2iu)^{-r}$.  For every $u\in\R$, let
$m(u)=(1+4u^2)^{-1/4}$ and $b(u)=u-\frac{1}{2}\arctan(2u)$.  Since
\[
 1-2iu=\sqrt{1+4u^2}\exp\bigl(-i\arctan(2u)\bigr),\qquad
 1+2iu=\sqrt{1+4u^2}\exp\bigl(i\arctan(2u)\bigr),
\]
independence gives
\begin{align*}
 \E[\exp(iuZ_\delta)]
 &=\exp(-iu\delta)(1-2iu)^{-\frac14(1+\delta)}
   (1+2iu)^{-\frac14(1-\delta)}\\
 &=m(u)\exp\left[-i\delta\left(u-\frac{1}{2}\arctan(2u)\right)\right]\\
 &=m(u)\exp\bigl(-i\delta b(u)\bigr).
\end{align*}
Taking real and imaginary parts gives
\begin{equation}\label{eq:q6-trig-transforms}
 \E\cos(uZ_\delta)=m(u)\cos(\delta b(u)),\qquad
 \E\sin(uZ_\delta)=-m(u)\sin(\delta b(u)).
\end{equation}
The moment comparison below uses the difference between the cosine transforms
corresponding to the parameters $\delta$ and $1$.  Using
\eqref{eq:q6-trig-transforms} with these two parameters gives, for every
$u\in\R$,
\[
 \E\cos(uZ_\delta)-\E\cos(uZ_1)
 =m(u)[\cos(\delta b(u))-\cos b(u)].
\]
For $0\le\delta<1$ and $0<p<6$, we first verify that the integral of the
right-hand side against $u^{-p-1}$ over $(0,\infty)$ is absolutely
convergent.  The expansion
$\arctan(2u)=2u-8u^3/3+O(u^5)$ gives
$b(u)=4u^3/3+O(u^5)$ at zero.  Since
$\cos(\delta b(u))-\cos b(u)=O(b(u)^2)$, for all $0\le\delta<1$,
\[
 m(u)[\cos(\delta b(u))-\cos b(u)]u^{-p-1}=O(u^{5-p})
 \quad(u\downarrow0).
\]
At infinity, boundedness of the cosine functions and
$m(u)=O(u^{-1/2})$ give an $O(u^{-p-3/2})$ bound.  The estimates at zero and
infinity show that
\begin{equation}\label{eq:q6-phase-absolute}
 \int_0^\infty m(u)|\cos(\delta b(u))-\cos b(u)|u^{-p-1}\dd u<\infty.
\end{equation}
We now prove that the corresponding integral without the absolute value is
positive:
\begin{equation}\label{eq:q6-phase-positive}
 \int_0^\infty m(u)[\cos(\delta b(u))-\cos b(u)]u^{-p-1}\dd u>0.
\end{equation}
For $u>0$, one has $b'(u)=4u^2/(1+4u^2)>0$, while $b(0)=0$ and
$b(u)\to\infty$ as $u\to\infty$.  Since $b$ is continuous and strictly
increasing, the intermediate value theorem gives, for every $x>0$, a unique
positive $u$ such that $x=b(u)$.  For every $p>0$, define
\begin{equation}\label{eq:q6-Wp}
 W_p(x):=\frac{m(u)u^{-p-1}}{b'(u)}
 =\frac14u^{-p-3}(1+4u^2)^{3/4}.
\end{equation}
The uniqueness of $u$ shows that $W_p$ is well defined on $(0,\infty)$.
Changing variables from $u$ to $x=b(u)$ gives
\begin{equation}\label{eq:q6-phase-change-variable}
 \int_0^\infty m(u)[\cos(\delta b(u))-\cos b(u)]u^{-p-1}\dd u
 =\int_0^\infty W_p(x)[\cos(\delta x)-\cos x]\dd x.
\end{equation}
We now show that $W_p$ is completely monotone.  Let $v=1/(2u)$.  Then
\[
 W_p(x)=2^{p+1}v^{p+3/2}(1+v^2)^{3/4}.
\]
Since $-\dd v/\dd x=2v^2(1+v^2)$, for all $\alpha,\beta>0$ the chain rule gives
\begin{align}
 -\frac{\dd}{\dd x}[v^\alpha(1+v^2)^\beta]
 &=2v^2(1+v^2)\frac{\dd}{\dd v}[v^\alpha(1+v^2)^\beta]\notag\\
 &=2\alpha v^{\alpha+1}(1+v^2)^{\beta+1}
   +4\beta v^{\alpha+3}(1+v^2)^\beta.
 \label{eq:q6-W-derivative}
\end{align}
Each term on the right is a positive constant times a positive power of
$v$ and a positive power of $1+v^2$.  Applying
\eqref{eq:q6-W-derivative} to every term preserves this form and the
positivity of every coefficient.  Starting from
$W_p(x)=2^{p+1}v^{p+3/2}(1+v^2)^{3/4}$ and repeating this argument $k$
times gives $(-1)^kW_p^{(k)}(x)>0$ for every non-negative integer $k$.
The calculation uses only $p>0$, so $W_p$ is completely monotone for every
$p>0$.

We use Bernstein's Laplace-mixture
theorem~\cite[Theorem~1.4]{SchillingEtAl}: if a smooth function $W$ on
$(0,\infty)$ satisfies $(-1)^kW^{(k)}\ge0$ for every $k\ge0$, then
$W(x)=\int_0^\infty\exp(-tx)\rho(\dd t)$ for a positive measure $\rho$,
which need not be finite.
Applying it to $W_p$ gives a nonzero positive measure $\rho_p$ such that
\begin{equation}\label{eq:q6-Bernstein-representation}
 W_p(x)=\int_0^\infty \exp(-tx)\rho_p(\dd t).
\end{equation}
Equation~\eqref{eq:q6-Wp} and $b(u)\to\infty$ show that
$W_p(x)\to0$ as $x\to\infty$.  Since
$\exp(-tx)\le\exp(-t)$ for $x\ge1$ and
$\int_0^\infty\exp(-t)\rho_p(\dd t)=W_p(1)<\infty$, the dominated
convergence theorem gives
$\rho_p(\{0\})=\lim_{x\to\infty}W_p(x)=0$.  The measure $\rho_p$ is nonzero,
so $\rho_p((0,\infty))>0$.  Moreover,
$|\cos(\delta x)-\cos x|\le C\min\{x^2,1\}$.  Equations
\eqref{eq:q6-phase-absolute} and \eqref{eq:q6-phase-change-variable} show
that its product with $W_p$ is integrable.  Tonelli's theorem gives
\[
 \int_0^\infty\int_0^\infty \exp(-tx)
 |\cos(\delta x)-\cos x|\dd x\,\rho_p(\dd t)
 =\int_0^\infty W_p(x)|\cos(\delta x)-\cos x|\dd x<\infty.
\]
Hence Fubini is justified.
Therefore
\begin{align*}
 \int_0^\infty W_p(x)[\cos(\delta x)-\cos x]\dd x
 &=\int_0^\infty\left[\int_0^\infty \exp(-tx)
 (\cos(\delta x)-\cos x)\dd x\right]\rho_p(\dd t)\\
 &=\int_0^\infty\left(\frac{t}{t^2+\delta^2}
 -\frac{t}{t^2+1}\right)\rho_p(\dd t)>0.
\end{align*}
Here the strict inequality uses $0\le\delta<1$ and
$\rho_p((0,\infty))>0$.  This proves
\eqref{eq:q6-phase-positive}.

\medskip
\noindent\emph{From the integral to the moments.}
For $4<p<6$, let
$\psi(s)=\cos s-1+\frac{1}{2}s^2-\frac{1}{24}s^4$.
The function $\psi(s)s^{-p-1}$ is $O(s^{5-p})$ at zero and
$O(s^{3-p})$ at infinity, so its integral over $(0,\infty)$ is finite.
Both $\psi$ and $\psi'$ vanish at zero.  Since
$1-\cos s<\frac{1}{2}s^2$, we have
$\psi''(s)=1-\cos s-\frac{1}{2}s^2<0$ for $s>0$.
It follows first that $\psi'(s)<0$ and then that $\psi(s)<0$ for $s>0$.
Therefore
\[
 c_p:=\left[\int_0^\infty\psi(s)s^{-p-1}\dd s\right]^{-1}<0.
\]
Since $\psi$ is even, for every $y\ne0$ the substitution $s=u|y|$ gives
\begin{equation}\label{eq:q6-psi-pointwise}
 c_p\int_0^\infty\psi(uy)u^{-p-1}\dd u
 =c_p|y|^p\int_0^\infty\psi(s)s^{-p-1}\dd s=|y|^p.
\end{equation}
For $y=0$, both sides are zero.  Since $\psi$ is nonpositive, for every
random variable $Y$ with
$\E|Y|^p<\infty$, Tonelli's theorem and \eqref{eq:q6-psi-pointwise}
give
\begin{equation}\label{eq:q6-psi-absolute-integrability}
 \int_0^\infty\E|\psi(uY)|u^{-p-1}\dd u
 =\E\left[\int_0^\infty|\psi(uY)|u^{-p-1}\dd u\right]
 =-\frac{1}{c_p}\E|Y|^p.
\end{equation}
Since $c_p<0$ and $\E|Y|^p<\infty$, the right-hand side of
\eqref{eq:q6-psi-absolute-integrability} is finite.  Fubini's theorem
therefore gives the following absolute-moment representation:
\begin{equation}\label{eq:q6-absolute-moment-representation}
 \E|Y|^p=c_p\int_0^\infty
 [\E\cos(uY)-1+\frac{1}{2}u^2\E Y^2-\frac{1}{24}u^4\E Y^4]
 u^{-p-1}\dd u.
\end{equation}
By \eqref{eq:q6-low-moments},
$\E Z_\delta^2=\E Z_1^2=2$ and
$\E Z_\delta^4=\E Z_1^4=60$.  Applying
\eqref{eq:q6-absolute-moment-representation} first to $Z_\delta$ and then
to $Z_1$, and subtracting the two equalities, gives
\begin{align}
 {}&\E|Z_\delta|^p-\E|Z_1|^p\notag\\
 ={}&c_p\int_0^\infty\Bigl[\E\cos(uZ_\delta)-\E\cos(uZ_1)
   +\frac{1}{2}u^2(\E Z_\delta^2-\E Z_1^2)
   -\frac{1}{24}u^4(\E Z_\delta^4-\E Z_1^4)\Bigr]u^{-p-1}\dd u\notag\\
 ={}&c_p\int_0^\infty
   [\E\cos(uZ_\delta)-\E\cos(uZ_1)]u^{-p-1}\dd u\notag\\
 ={}&c_p\int_0^\infty m(u)[\cos(\delta b(u))-\cos b(u)]
   u^{-p-1}\dd u.
 \label{eq:q6-phase-moment-difference}
\end{align}
Here the last equality follows from \eqref{eq:q6-trig-transforms}.  Since
$Z_1\stackrel d=g^2-1$, the left-hand side of
\eqref{eq:q6-phase-moment-difference} is
$\E|Z_\delta|^p-\E|g^2-1|^p$.
For $0\le\delta<1$, the sign of $c_p$ and
\eqref{eq:q6-phase-positive} make the right-hand side strictly negative.
For $\delta=1$, the two moments are equal.  This proves part~(2), including
its equality condition.
\end{proof}

The comparison in \cref{thm:gamma-phase-comparison} can be extended to every
$p\ge4$.
Signed absolute moments change sign when $\delta$ is replaced by $-\delta$.
Thus it suffices to take $0\le\delta\le1$ throughout the
signed-moment argument below.

\begin{theorem}
\label{thm:q6-scalar-all-high}
For every $p\ge4$ and every $0\le\delta\le1$,
\begin{equation}\label{eq:q6-scalar-all-high}
 \E|Z_\delta|^p\le\E|Z_1|^p=\E|g^2-1|^p.
\end{equation}
For every $p>4$ and every $0\le\delta<1$, the inequality is strict.
\end{theorem}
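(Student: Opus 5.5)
The range $4\le p\le 6$ is already settled by \cref{thm:gamma-phase-comparison}: parts (1)--(3) give the inequality, with equality at $p=4$ and strictness for $4<p<6$ when $\delta<1$; at $p=6$ strictness follows from $5400+640\delta^2<6040$. To reach all larger exponents I would derive recurrences expressing higher absolute moments of $Z_\delta$ through lower ones, and then induct on the interval $(4+2k,6+2k]$.

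The natural tool is a Stein-type identity for $Z_\delta=A-B-\delta$. For $X\sim\operatorname{Gamma}(\alpha,\frac12)$ one has $\E[Xh(X)]=2\E[\alpha h(X)+Xh'(X)]$. Applied separately to $A$ and to $B$ (conditioning on the other), this gives
\[
\E[Z_\delta h(Z_\delta)]=2\E[h'(Z_\delta)]+\E[(A+B-1)\,2h'(Z_\delta)]\cdots .
\]
The mixed quantity $A+B$ is not a function of $Z_\delta$, so I would instead work with two signed moment families, $S_p(\delta)=\E[|Z_\delta|^p\operatorname{sgn}Z_\delta]$ and $T_p(\delta)=\E|Z_\delta|^p$. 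Differentiating the characteristic function gives the first-order ODE
\[
\phi'(u)=\phi(u)\Bigl(-i\delta+\tfrac{i(1+\delta)/2}{1-2iu}-\tfrac{i(1-\delta)/2}{1+2iu}\Bigr).
\]
Clearing denominators yields $(1+4u^2)\phi'=i\phi\cdot(\text{linear in }u)$, which translates to the density-level identity
\[
\E[(4Z_\delta+c)h''(Z_\delta)]=\ldots,
\]
that is, $\E[(Z^2\!\cdot)\ldots]$ relations. Testing this with $h(x)=|x|^{p}$ and $h(x)=|x|^p\operatorname{sgn}x$ should give recurrences of the form
\[
T_{p+2}=a_pT_p+b_p\,\delta\,S_{p+1}+\ldots,\qquad S_{p+1}=\alpha_p\,\delta\, T_p+\beta_p S_{p-1}+\ldots,
\]
with coefficients polynomial in $p$. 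These are the "two recurrences" the paper alludes to.

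With the recurrences in hand, the induction hypothesis should be that $T_p(\delta)\le T_p(1)$ and $0\le \delta$-weighted $S_{p+1}(\delta)\le S_{p+1}(1)$ for $p$ in the base window. Both inequalities then propagate to $p+2$ provided all recurrence coefficients are nonnegative, since at $\delta=1$ the same recurrences give the rank-one values. Strictness would propagate from the strict $T_p$ term. The signed base case $S_p$ for $p\in(3,5]$ needs its own argument, which I would mimic from \cref{thm:gamma-phase-comparison} using the sine transform $-m(u)\sin(\delta b(u))$. Here positivity of $\int W\,[\sin x-\delta^{-1}\!\cdot\ldots]$ is less clean, so I would compare $\sin(\delta x)$ with $\delta\sin x$ via the Bernstein representation, giving $\delta/(t^2+\delta^2)$ versus $1/(t^2+1)$.

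The main obstacle I expect is exactly the sign of these terms. The coefficients in the recurrence may not all be nonnegative, or the signed base case may require a comparison like $S_p(\delta)\le \delta S_p(1)$ rather than $S_p(\delta)\le S_p(1)$. My first attempt would be to choose the inductive statement as the pair $T_p(\delta)\le T_p(1)$ and $S_p(\delta)\le\delta S_p(1)$, and to check that the recurrence is monotone in this pair.
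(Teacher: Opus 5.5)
Your plan has a genuine gap in the induction, and it comes from guessing the wrong indices in the recurrences. Your characteristic-function ODE, $(1+4u^2)\phi'=-(2u+4i\delta u^2)\phi$, is equivalent to the Stein identity $\E[Z_\delta f(Z_\delta)]=\E[2f'(Z_\delta)+4(Z_\delta+\delta)f''(Z_\delta)]$. Write $T_r=\E|Z_\delta|^r$ and $S_r=\E[Z_\delta|Z_\delta|^{r-1}]$. The identity then gives
\[
T_p=2(p-1)(2p-3)\,T_{p-2}+4\delta(p-1)(p-2)\,S_{p-3},\qquad
S_q=2(q-1)(2q-3)\,S_{q-2}+4\delta(q-1)(q-2)\,T_{q-3}.
\]
The cross terms lag by three, not by one as in your guessed forms. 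From $T$ on $[4,6]$ and $S$ on $[3,5]$ you do get $T$ on $[6,8]$. But $T$ on $(8,10]$ needs $S$ on $(5,7]$, and the recurrence produces that only from the absolute comparison on $(2,4]$. That comparison is not merely unavailable; it is false. For $2<r<4$ the normalizer $\int_0^\infty(\cos s-1+\frac12 s^2)s^{-r-1}\dd s$ is positive. The cosine representation together with \eqref{eq:q6-phase-positive}, which holds for all $0<p<6$, then gives $\E|Z_\delta|^r>\E|Z_1|^r$ for $0\le\delta<1$. So a second, independent signed base case on $[5,7]$ is unavoidable, and your proposal has nothing for it. The paper gets the window $[3,5]$ from the recurrence, using the signed comparison on $[1,3]$ and the absolute comparison on $[0,2]$, where it does hold.

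This missing base case cannot be handled by the simple sine-transform argument with your invariant. Because $\E Z_\delta^3=8\delta$ and $\E Z_\delta^5=544\delta$ are linear in $\delta$, the polynomial terms cancel in the finite-part representation (normalizer $J_q<0$). For $5<q<7$ and $0<\delta<1$ this gives
\[
S_q(\delta)-\delta S_q(1)=J_q^{-1}\int_0^\infty W_q(x)[\delta\sin x-\sin(\delta x)]\dd x
=J_q^{-1}\int_0^\infty\frac{-\delta(1-\delta^2)}{(t^2+1)(t^2+\delta^2)}\rho_q(\dd t)>0 .
\]
So $S_q(\delta)\le\delta S_q(1)$ fails there. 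On $(3,5)$ the normalizer is positive and it holds, which is why your first step looks fine.

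The invariant that survives is the paper's $\delta S_q(\delta)\le S_q(1)$. The $\delta$-weighting is also what makes the Laplace kernels comparable: $\delta^2/(t^2+\delta^2)\le 1/(t^2+1)$. Your proposed comparison of $\delta/(t^2+\delta^2)$ with $1/(t^2+1)$ changes sign at $t^2=\delta$.

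Proving $\delta S_q(\delta)\le S_q(1)$ on $[5,7]$ leaves a non-cancelling term $(1-\delta^2)(\frac43u^3-\frac{68}{15}u^5)$ in the integrand. The paper splits off $R_\delta(x)=\sin x-\delta\sin(\delta x)-(1-\delta^2)x+\frac16(1-\delta^4)x^3$, whose Laplace transform is nonnegative. It then proves the explicit pointwise bound \eqref{eq:q6-finite-part-remainder-bound} for what remains. This direct base case on $[5,7]$ is the missing ingredient; without it your induction stalls at $p=8$.
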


\begin{proof}
\noindent\emph{Absolute moments of order at most two.}
The argument proving \eqref{eq:q6-phase-positive} only requires $0<p<6$.
Hence, for $0<p<2$, its integral is positive when $0\le\delta<1$ and is
zero when $\delta=1$.  Moreover,
$(1-\cos s)s^{-p-1}$ is $O(s^{1-p})$ at zero and $O(s^{-p-1})$ at
infinity.  It is nonnegative for $s>0$ and strictly positive on
$(0,2\pi)$.  Its integral is therefore finite and strictly positive.  For
$y\ne0$, the substitution $s=u|y|$ gives
\[
 \int_0^\infty[1-\cos(uy)]u^{-p-1}\dd u
 =|y|^p\int_0^\infty(1-\cos s)s^{-p-1}\dd s.
\]
The identity also holds for $y=0$.  Tonelli's theorem now gives
\begin{equation}\label{eq:q6-low-absolute-representation}
 \E|Y|^p=
 \frac{\displaystyle\int_0^\infty
 [1-\E\cos(uY)]u^{-p-1}\dd u}
 {\displaystyle\int_0^\infty(1-\cos s)s^{-p-1}\dd s}.
\end{equation}
Applying \eqref{eq:q6-low-absolute-representation} to $Z_\delta$ and
$Z_1$, and subtracting the two equalities, gives
\begin{align*}
 \E|Z_\delta|^p-\E|Z_1|^p
 &=\frac{\displaystyle\int_0^\infty
 [\E\cos(uZ_1)-\E\cos(uZ_\delta)]u^{-p-1}\dd u}
 {\displaystyle\int_0^\infty(1-\cos s)s^{-p-1}\dd s}\\
 &=-\frac{\displaystyle\int_0^\infty m(u)
 [\cos(\delta b(u))-\cos b(u)]u^{-p-1}\dd u}
 {\displaystyle\int_0^\infty(1-\cos s)s^{-p-1}\dd s}\le0.
\end{align*}
Here the second equality follows from \eqref{eq:q6-trig-transforms}, and
the inequality follows from \eqref{eq:q6-phase-positive}.  Thus,
for all $0\le p\le2$,
\begin{equation}\label{eq:q6-absolute-low-base}
 \E|Z_\delta|^p\le\E|Z_1|^p,
\end{equation}
where the endpoints follow from $\E|Z_\delta|^0=1$ and
$\E|Z_\delta|^2=2$.

\medskip
\noindent\emph{Signed moments of order from one to three.}
For all $1\le q\le3$, we prove the signed comparison
\begin{equation}\label{eq:q6-signed-low-base}
 0\le\delta\E[Z_\delta|Z_\delta|^{q-1}]
 \le\E[Z_1|Z_1|^{q-1}].
\end{equation}
For $1<q<3$, let
$D_q=\int_0^\infty(\sin s-s)s^{-q-1}\dd s$.  The integrand is
$O(s^{2-q})$ at zero and $O(s^{-q})$ at infinity, so the integral is
finite.  Since $\sin s-s<0$ for $s>0$, we have $D_q<0$.  For $x>0$,
the substitution $s=ux$ gives
\[
 \int_0^\infty(\sin(ux)-ux)u^{-q-1}\dd u
 =x^qD_q.
\]
The integral on the left is an odd function of $x$.  Since
$x|x|^{q-1}=x^q$ for $x>0$, we obtain, for every $x\in\R$,
\[
 x|x|^{q-1}=D_q^{-1}\int_0^\infty
 (\sin(ux)-ux)u^{-q-1}\dd u.
\]
For $Y=Z_\delta$ or $Y=Z_1$, the substitution $s=u|Y|$ and Tonelli's
theorem give
\[
 \int_0^\infty\E|\sin(uY)-uY|u^{-q-1}\dd u
 =\E|Y|^q\int_0^\infty|\sin s-s|s^{-q-1}\dd s<\infty.
\]
The right-hand side is finite because the Gamma-difference variables have
moments of every order.  Therefore taking expectations in the identity for
$x=Z_\delta$ and $x=Z_1$ is justified by Fubini's theorem.  Since
$\E Z_\delta=\E Z_1=0$, \eqref{eq:q6-trig-transforms} gives
\begin{align}
 \E[Z_\delta|Z_\delta|^{q-1}]
 &=D_q^{-1}\int_0^\infty
   \E[\sin(uZ_\delta)-uZ_\delta]u^{-q-1}\dd u\notag\\
 &=D_q^{-1}\int_0^\infty
   \E\sin(uZ_\delta)u^{-q-1}\dd u\notag\\
 &=-D_q^{-1}\int_0^\infty
   m(u)\sin(\delta b(u))u^{-q-1}\dd u.
   \label{eq:q6-signed-low-zdelta}\\
 \E[Z_1|Z_1|^{q-1}]
 &=D_q^{-1}\int_0^\infty
   \E[\sin(uZ_1)-uZ_1]u^{-q-1}\dd u\notag\\
 &=D_q^{-1}\int_0^\infty
   \E\sin(uZ_1)u^{-q-1}\dd u\notag\\
 &=-D_q^{-1}\int_0^\infty
   m(u)\sin b(u)u^{-q-1}\dd u.
   \label{eq:q6-signed-low-zone}
\end{align}
Thus,
\begin{equation}\label{eq:q6-signed-low-difference}
 \E[Z_1|Z_1|^{q-1}]
 -\delta\E[Z_\delta|Z_\delta|^{q-1}]
 =-D_q^{-1}\int_0^\infty m(u)
 [\sin b(u)-\delta\sin(\delta b(u))]u^{-q-1}\dd u.
\end{equation}
Since $-D_q^{-1}>0$, \eqref{eq:q6-signed-low-difference} shows that the upper bound in
\eqref{eq:q6-signed-low-base} is equivalent to
\[
 \int_0^\infty m(u)
 [\sin b(u)-\delta\sin(\delta b(u))]u^{-q-1}\dd u\ge0.
\]
For every $t>0$, direct integration gives
\begin{equation}\label{eq:q6-sine-laplace-transform}
 \int_0^\infty \exp(-tx)[\sin x-\delta\sin(\delta x)]\dd x
 =\frac1{t^2+1}-\frac{\delta^2}{t^2+\delta^2}
 =\frac{t^2(1-\delta^2)}{(t^2+1)(t^2+\delta^2)}\ge0.
\end{equation}
The proof of complete monotonicity for $W_p$ uses only $p>0$.  Since $q>0$,
$W_q$ is completely monotone.  Let $\rho_q$ be the positive measure
representing $W_q$ in \eqref{eq:q6-Bernstein-representation}.  To justify
Fubini's theorem, first note that
$m(u)|\sin b(u)-\delta\sin(\delta b(u))|u^{-q-1}$ is
$O(u^{2-q})$ at zero and $O(u^{-q-3/2})$ at infinity, hence integrable for
$1<q<3$.  Equation~\eqref{eq:q6-Wp}, the substitution $x=b(u)$,
\eqref{eq:q6-Bernstein-representation}, and Tonelli's theorem give
\begin{align*}
 {}&\int_0^\infty m(u)
 |\sin b(u)-\delta\sin(\delta b(u))|u^{-q-1}\dd u\\
 ={}&\int_0^\infty W_q(x)|\sin x-\delta\sin(\delta x)|\dd x\\
 ={}&\int_0^\infty\int_0^\infty \exp(-tx)
 |\sin x-\delta\sin(\delta x)|\dd x\,\rho_q(\dd t)<\infty.
\end{align*}
Fubini's theorem therefore applies.  Equations~\eqref{eq:q6-Wp},
\eqref{eq:q6-Bernstein-representation}, and
\eqref{eq:q6-sine-laplace-transform} give
\begin{align*}
 {}&\int_0^\infty m(u)
 [\sin b(u)-\delta\sin(\delta b(u))]u^{-q-1}\dd u\\
 ={}&\int_0^\infty W_q(x)[\sin x-\delta\sin(\delta x)]\dd x\\
 ={}&\int_0^\infty\left[\int_0^\infty \exp(-tx)
 [\sin x-\delta\sin(\delta x)]\dd x\right]\rho_q(\dd t)\\
 ={}&\int_0^\infty
 \frac{t^2(1-\delta^2)}{(t^2+1)(t^2+\delta^2)}\rho_q(\dd t)\ge0.
\end{align*}
This proves the second inequality
in \eqref{eq:q6-signed-low-base}.  To prove the first inequality, we also exchange
the Laplace mixture and integration for $\sin(\delta x)$.  Since
$\sin(\delta b(u))=O(u^3)$ at zero and is bounded at infinity,
\[
 \int_0^\infty m(u)|\sin(\delta b(u))|u^{-q-1}\dd u<\infty.
\]
When $\delta=0$, \eqref{eq:q6-signed-low-zdelta} gives
$\E[Z_\delta|Z_\delta|^{q-1}]=0$.  When $0<\delta\le1$,
Tonelli's theorem applied to the absolute value and then Fubini's theorem
give
\begin{align*}
 \int_0^\infty m(u)\sin(\delta b(u))u^{-q-1}\dd u
 &=\int_0^\infty W_q(x)\sin(\delta x)\dd x\\
 &=\int_0^\infty\left[\int_0^\infty
   \exp(-tx)\sin(\delta x)\dd x\right]\rho_q(\dd t)\\
 &=\int_0^\infty\frac{\delta}{t^2+\delta^2}\rho_q(\dd t)\ge0.
\end{align*}
Since $-D_q^{-1}>0$, \eqref{eq:q6-signed-low-zdelta} shows that
$\E[Z_\delta|Z_\delta|^{q-1}]\ge0$.
For $q=1$, all three terms in \eqref{eq:q6-signed-low-base} are zero.  For
$q=3$, the two inequalities reduce to $0\le8\delta^2\le8$.  This proves
\eqref{eq:q6-signed-low-base} at both endpoints.

\medskip
\noindent\emph{Signed moments of order from five to seven.}
For all $5\le q\le7$, we next prove
\begin{equation}\label{eq:q6-signed-five-seven}
 \delta\E[Z_\delta|Z_\delta|^{q-1}]
 \le\E[Z_1|Z_1|^{q-1}].
\end{equation}
Let $5<q<7$ and let
\[
 R_\delta(x)=\sin x-\delta\sin(\delta x)
 -(1-\delta^2)x+\frac{1}{6}(1-\delta^4)x^3.
\]
Taylor's formula gives
$\sin s-s+\frac{1}{6}s^3-\frac{1}{120}s^5=O(s^7)$ as $s\downarrow0$.
For $s\ge1$, $|\sin s|\le1$, and hence
\[
 \left|\sin s-s+\frac{1}{6}s^3-\frac{1}{120}s^5\right|
 \le1+s+\frac{1}{6}s^3+\frac{1}{120}s^5\le3s^5.
\]
Consequently,
$\sin s-s+\frac{1}{6}s^3-\frac{1}{120}s^5=O(s^5)$ as $s\to\infty$.
After multiplication by $s^{-q-1}$, these bounds are integrable because
$5<q<7$.
\begin{samepage}
Let $J_q$ denote the following integral.  Taylor's formula with
integral remainder and $\sin t<t$ for $t>0$ give
\begin{align*}
 J_q
 &=\int_0^\infty
 \left(\sin s-s+\frac{1}{6}s^3-\frac{1}{120}s^5\right)s^{-q-1}\dd s\\
 &=-\frac{1}{6}\int_0^\infty
 \left[\int_0^s(s-t)^3(t-\sin t)\dd t\right]s^{-q-1}\dd s<0.
\end{align*}
\end{samepage}
For $x>0$, the substitution $s=ux$ gives
\[
 \int_0^\infty
 \left(\sin(ux)-ux+\frac{1}{6}(ux)^3-\frac{1}{120}(ux)^5\right)
 u^{-q-1}\dd u=x^qJ_q.
\]
The integral on the left is an odd function of $x$.  Since
$x|x|^{q-1}=x^q$ for $x>0$, we obtain, for every $x\in\R$,
\begin{equation}\label{eq:q6-finite-part-pointwise}
 x|x|^{q-1}=
 \frac1{J_q}\int_0^\infty
 \left(\sin(ux)-ux+\frac{1}{6}(ux)^3-\frac{1}{120}(ux)^5\right)
 u^{-q-1}\dd u.
\end{equation}
After multiplication by $s^{-q-1}$, the absolute value of the integrand is
$O(s^{6-q})$ at zero and $O(s^{4-q})$ at infinity.  Hence its integral is
finite for $5<q<7$.
The substitution $s=u|x|$ gives, for every $x\in\R$,
\begin{align*}
 {}&\int_0^\infty
 \left|\sin(ux)-ux+\frac{1}{6}(ux)^3-\frac{1}{120}(ux)^5\right|
 u^{-q-1}\dd u\\
 ={}&|x|^q\int_0^\infty
 \left|\sin s-s+\frac{1}{6}s^3-\frac{1}{120}s^5\right|
 s^{-q-1}\dd s.
\end{align*}
Therefore, for $Y=g^2-1$ or $Y=Z_\delta$, Tonelli's theorem gives
\begin{align}
 {}&\int_0^\infty\E\left|
 \sin(uY)-uY+\frac{1}{6}(uY)^3-\frac{1}{120}(uY)^5
 \right|u^{-q-1}\dd u\notag\\
 ={}&\E|Y|^q\int_0^\infty
 \left|\sin s-s+\frac{1}{6}s^3-\frac{1}{120}s^5\right|
 s^{-q-1}\dd s<\infty.
 \label{eq:q6-finite-part-fubini}
\end{align}
By \eqref{eq:q6-finite-part-fubini}, Fubini's theorem applies when taking
expectations in \eqref{eq:q6-finite-part-pointwise}.  Since
$\E Z_\delta=0$, $\E Z_\delta^3=8\delta$, and
$\E Z_\delta^5=544\delta$, \eqref{eq:q6-finite-part-pointwise} gives
\begin{align*}
 \E[Z_\delta|Z_\delta|^{q-1}]
 &=\frac1{J_q}\int_0^\infty
 \left[\E\sin(uZ_\delta)-u\E Z_\delta
 +\frac{1}{6}u^3\E Z_\delta^3-\frac{1}{120}u^5\E Z_\delta^5\right]
 u^{-q-1}\dd u\\
 &=\frac1{J_q}\int_0^\infty
 \left[\E\sin(uZ_\delta)
 +\frac{4}{3}\delta u^3-\frac{68}{15}\delta u^5\right]
 u^{-q-1}\dd u\\
 &=\frac1{J_q}\int_0^\infty
 \left[-m(u)\sin(\delta b(u))
 +\frac{4}{3}\delta u^3-\frac{68}{15}\delta u^5\right]
 u^{-q-1}\dd u.
\end{align*}
Here the last equality follows from \eqref{eq:q6-trig-transforms}.  Thus,
\begin{align}
 {}&\E[Z_1|Z_1|^{q-1}]
 -\delta\E[Z_\delta|Z_\delta|^{q-1}]\notag\\
 ={}&\frac1{J_q}\int_0^\infty
 \Bigl[-m(u)\bigl(\sin b(u)-\delta\sin(\delta b(u))\bigr)
 +(1-\delta^2)\left(\frac43u^3-\frac{68}{15}u^5\right)\Bigr]
 u^{-q-1}\dd u.
 \label{eq:q6-signed-finite-part}
\end{align}
The numerator in \eqref{eq:q6-signed-finite-part} is of order $O(u^7)$
at zero, so the weighted integrand is $O(u^{6-q})$.  At infinity the
polynomial term gives $O(u^{4-q})$.  Since $5<q<7$, both bounds are
integrable, and the integral is absolutely convergent.

For $c\ge0$ and $t>0$, integration gives
\[
 \int_0^\infty \exp(-tx)\sin(cx)\dd x=\frac{c}{t^2+c^2},\qquad
 \int_0^\infty x\exp(-tx)\dd x=\frac1{t^2},\qquad
 \int_0^\infty x^3\exp(-tx)\dd x=\frac6{t^4}
\]
and hence
\begin{align}
 \int_0^\infty \exp(-tx)R_\delta(x)\dd x
 &=\frac1{t^2+1}-\frac{\delta^2}{t^2+\delta^2}
   -\frac{1-\delta^2}{t^2}+\frac{1-\delta^4}{t^4}\notag\\
 &=\frac{(1-\delta^2)
 [(1+\delta^2+\delta^4)t^2+\delta^2(1+\delta^2)]}
 {t^4(t^2+1)(t^2+\delta^2)}\ge0.
 \label{eq:q6-Rdelta-laplace}
\end{align}
Since $R_\delta(x)=O(x^5)$ and $b(u)/u^3\longrightarrow4/3$ at zero, while
$R_\delta(x)=O(x^3)$ and $b(u)/u\longrightarrow1$ at infinity,
$m(u)|R_\delta(b(u))|u^{-q-1}$ is $O(u^{14-q})$ at zero and
$O(u^{3/2-q})$ at infinity.  Both bounds are integrable for $5<q<7$, so
\[
 \int_0^\infty m(u)|R_\delta(b(u))|u^{-q-1}\dd u<\infty.
\]
The proof of complete monotonicity for $W_p$ uses only $p>0$.  Hence $W_q$
is completely monotone.  Let $\rho_q$ be the positive measure representing
$W_q$ in \eqref{eq:q6-Bernstein-representation}.
Equation~\eqref{eq:q6-Bernstein-representation} and Tonelli's
theorem give
\begin{align*}
 {}&\int_0^\infty\int_0^\infty
   \exp(-tx)|R_\delta(x)|\dd x\,\rho_q(\dd t)\\
 ={}&\int_0^\infty W_q(x)|R_\delta(x)|\dd x\\
 ={}&\int_0^\infty m(u)|R_\delta(b(u))|u^{-q-1}\dd u<\infty.
\end{align*}
Fubini's theorem now gives
\begin{align}
 \int_0^\infty m(u)R_\delta(b(u))u^{-q-1}\dd u
 &=\int_0^\infty W_q(x)R_\delta(x)\dd x\notag\\
 &=\int_0^\infty\left[\int_0^\infty
   \exp(-tx)R_\delta(x)\dd x\right]\rho_q(\dd t)\ge0.
 \label{eq:q6-Rdelta-positive}
\end{align}
Here the inequality follows from \eqref{eq:q6-Rdelta-laplace}.
The numerator in \eqref{eq:q6-signed-finite-part} equals
\[
 -m(u)R_\delta(b(u))+(1-\delta^2)
 \left[\frac43u^3-\frac{68}{15}u^5-m(u)b(u)
       +\frac{1}{6}(1+\delta^2)m(u)b(u)^3\right].
\]
Equation~\eqref{eq:q6-Rdelta-positive} gives
\[
 -\int_0^\infty m(u)R_\delta(b(u))u^{-q-1}\dd u\le0.
\]
It remains to prove, for all $u>0$, that
\begin{equation}\label{eq:q6-finite-part-remainder-bound}
 \frac43u^3-\frac{68}{15}u^5-m(u)b(u)
 +\frac{1}{6}(1+\delta^2)m(u)b(u)^3\le0.
\end{equation}
Since $0\le\delta\le1$, we may replace $(1+\delta^2)/6$ by $1/3$.
Because $(1+4s^2)^{-1}\ge1-4s^2$, we have
$4s^2-16s^4\le 4s^2/(1+4s^2)\le4s^2$.
Integration from $0$ to $u$ gives
\[
 \frac43u^3-\frac{16}{5}u^5
 =\int_0^u(4s^2-16s^4)\dd s
 \le b(u)
 \le\int_0^u4s^2\dd s
 =\frac43u^3.
\]
Moreover, $b(u)=u-\frac12\arctan(2u)\le u$.  Bernoulli's inequality
applied to $m(u)=(1+4u^2)^{-1/4}$ gives $m(u)\ge1-u^2$.  If
$u^2\le25/68$, then both $1-u^2$ and
$\frac43u^3-\frac{16}{5}u^5$ are nonnegative, and hence
\[
 m(u)b(u)\ge
 (1-u^2)\left(\frac43u^3-\frac{16}{5}u^5\right)
 =\frac43u^3-\frac{68}{15}u^5+\frac{16}{5}u^7.
\]
Together with $m(u)b(u)^3\le(4u^3/3)^3$, this gives
\[
 \frac43u^3-\frac{68}{15}u^5-m(u)b(u)
 +\frac{1}{6}(1+\delta^2)m(u)b(u)^3
 \le-\frac{16}{5}u^7+\frac{64}{81}u^9<0.
\]
If $u^2\ge25/68$, discard the negative term $-m(u)b(u)$ and use
$m(u)\le1$, $b(u)\le u$ to obtain
\[
 \frac43u^3-\frac{68}{15}u^5-m(u)b(u)
 +\frac{1}{6}(1+\delta^2)m(u)b(u)^3
 \le\frac43u^3-\frac{68}{15}u^5+\frac13u^3\le0.
\]
The two cases prove \eqref{eq:q6-finite-part-remainder-bound}.  Combining
\eqref{eq:q6-finite-part-remainder-bound} with
\eqref{eq:q6-Rdelta-positive} gives
\begin{align*}
 &\int_0^\infty
 \left[-m(u)\bigl(\sin b(u)-\delta\sin(\delta b(u))\bigr)
 +(1-\delta^2)\left(\frac43u^3-\frac{68}{15}u^5\right)\right]
 u^{-q-1}\dd u\le0.
\end{align*}
Thus the integral in \eqref{eq:q6-signed-finite-part} is
nonpositive.  Since $J_q<0$,
\eqref{eq:q6-signed-five-seven} follows for $5<q<7$.  For $5\le q\le7$,
$|Z_\delta|^q\le1+|Z_\delta|^8$.
Moreover, $|Z_\delta|\le A+B+1$ and
$A+B\sim\operatorname{Gamma}(\frac12,\frac12)$, independently of
$\delta$.  Hence
$q\mapsto\E[Z_\delta|Z_\delta|^{q-1}]$ is continuous on $[5,7]$ by
the dominated convergence theorem.  Letting $q\downarrow5$ and $q\uparrow7$ gives the
cases $q=5$ and $q=7$.

\medskip
\noindent\emph{Recurrences and induction.}
It remains to extend the base inequalities to all higher orders.  A Stein
identity for the difference of two independent Gamma variables with
possibly different shapes and rates is given in
\mbox{\cite[Proposition~1]{Forrester}}.
In that proposition, take the two shapes to be $\frac{1}{4}(1+\delta)$ and
$\frac{1}{4}(1-\delta)$, and take both rates to be $1/2$.  With these
parameters, the cited identity gives
\begin{equation}\label{eq:q6-gamma-Stein}
 \E\left[(A-B)f''(A-B)+\frac12f'(A-B)
 +\left(\frac\delta4-\frac{1}{4}(A-B)\right)f(A-B)\right]=0.
\end{equation}
Applying \eqref{eq:q6-gamma-Stein} to the function
$x\mapsto f(x-\delta)$ and using $A-B=Z_\delta+\delta$ gives
\[
 \E\left[(Z_\delta+\delta)f''(Z_\delta)+\frac12f'(Z_\delta)
 -\frac14Z_\delta f(Z_\delta)\right]=0.
\]
Rearranging gives
\begin{equation}\label{eq:q6-Z-Stein}
 \E[Z_\delta f(Z_\delta)]
 =\E[2f'(Z_\delta)+4(Z_\delta+\delta)f''(Z_\delta)].
\end{equation}
The cited proposition assumes positive shapes, so we first take
$0\le\delta<1$.  For $p>3$, the function $f(x)=x|x|^{p-2}$ is $C^2$, and all terms in
\eqref{eq:q6-Z-Stein} have finite expectations.  Its derivatives are
$f'(x)=(p-1)|x|^{p-2}$ and
$f''(x)=(p-1)(p-2)x|x|^{p-4}$.
Substitution in \eqref{eq:q6-Z-Stein} gives
\begin{align}
 \E|Z_\delta|^p
 &=2(p-1)\E|Z_\delta|^{p-2}
   +4(p-1)(p-2)
    \left[\E|Z_\delta|^{p-2}
    +\delta\E[Z_\delta|Z_\delta|^{p-4}]\right]\notag\\
 &=2(p-1)(2p-3)\E|Z_\delta|^{p-2}
   +4\delta(p-1)(p-2)\E[Z_\delta|Z_\delta|^{p-4}].
   \label{eq:q6-absolute-recurrence}
\end{align}
For $q>3$, the function $f(x)=|x|^{q-1}$ is also $C^2$, and all terms in
\eqref{eq:q6-Z-Stein} have finite expectations.  Its derivatives are
$f'(x)=(q-1)x|x|^{q-3}$ and
$f''(x)=(q-1)(q-2)|x|^{q-3}$.
Another substitution gives
\begin{align}
 \E[Z_\delta|Z_\delta|^{q-1}]
 &=2(q-1)\E[Z_\delta|Z_\delta|^{q-3}]
   +4(q-1)(q-2)
    \left[\E[Z_\delta|Z_\delta|^{q-3}]
    +\delta\E|Z_\delta|^{q-3}\right]\notag\\
 &=2(q-1)(2q-3)\E[Z_\delta|Z_\delta|^{q-3}]
   +4\delta(q-1)(q-2)\E|Z_\delta|^{q-3}.
 \label{eq:q6-signed-recurrence}
\end{align}
Both recurrences also hold at $\delta=1$.  To see this, let
$\delta\uparrow1$.  Equation~\eqref{eq:q6-Zdelta-mgf} shows that the MGF of
$Z_\delta$ converges on $(-1/2,1/2)$ to the MGF of $Z_1$, so
$Z_\delta$ converges to $Z_1$ in distribution.  Moreover,
$|Z_\delta|\le A+B+1$ and
$A+B\sim\operatorname{Gamma}(\frac12,\frac12)$ for every $\delta$.
Because this Gamma variable has moments of every order, all terms in the two
recurrences are uniformly integrable.  Their expectations therefore
converge, which proves the recurrences at $\delta=1$.
We first record two consequences of the recurrences.  Suppose first that
$q>3$ and that
$\delta\E[Z_\delta|Z_\delta|^{q-3}]
\le\E[Z_1|Z_1|^{q-3}]$ and
$\E|Z_\delta|^{q-3}\le\E|Z_1|^{q-3}$.  The positive coefficients in
\eqref{eq:q6-signed-recurrence} and $0\le\delta\le1$ give
\begin{align}
 \delta\E[Z_\delta|Z_\delta|^{q-1}]
 &=2(q-1)(2q-3)\delta\E[Z_\delta|Z_\delta|^{q-3}]
   +4(q-1)(q-2)\delta^2\E|Z_\delta|^{q-3}\notag\\
 &\le2(q-1)(2q-3)\E[Z_1|Z_1|^{q-3}]
   +4(q-1)(q-2)\E|Z_1|^{q-3}\notag\\
 &=\E[Z_1|Z_1|^{q-1}].
 \label{eq:q6-signed-induction-step}
\end{align}
Next suppose that $p>3$,
$\E|Z_\delta|^{p-2}\le\E|Z_1|^{p-2}$, and
$\delta\E[Z_\delta|Z_\delta|^{p-4}]
\le\E[Z_1|Z_1|^{p-4}]$.  Then
\begin{align}
 \E|Z_\delta|^p
 &=2(p-1)(2p-3)\E|Z_\delta|^{p-2}
   +4(p-1)(p-2)\delta\E[Z_\delta|Z_\delta|^{p-4}]\notag\\
 &\le2(p-1)(2p-3)\E|Z_1|^{p-2}
   +4(p-1)(p-2)\E[Z_1|Z_1|^{p-4}]\notag\\
 &=\E|Z_1|^p.
 \label{eq:q6-absolute-induction-step}
\end{align}

For $3<q\le5$, one has $1<q-2\le3$ and $0<q-3\le2$.
Consequently, \eqref{eq:q6-signed-low-base} and
\eqref{eq:q6-absolute-low-base} give the two assumptions used in
\eqref{eq:q6-signed-induction-step}.  Thus the signed
upper comparison holds for $3\le q\le5$.  Theorem~\ref{thm:gamma-phase-comparison}
gives the moment comparison for $4\le p\le6$.  For $6<p\le8$, one has
$4<p-2\le6$ and $3<p-3\le5$.  Hence the moment comparison on $[4,6]$
and the signed comparison on $[3,5]$ give the two assumptions used in
\eqref{eq:q6-absolute-induction-step}.  Together
with \eqref{eq:q6-signed-five-seven}, the base intervals are therefore $[4,8]$
for the moment comparison and $[3,7]$ for the signed upper comparison.
For $6<p\le8$, the inequality is strict when $\delta<1$, because
$4<p-2\le6$, the coefficient of $\E|Z_\delta|^{p-2}$ in
\eqref{eq:q6-absolute-recurrence} is positive, and
$\E|Z_\delta|^{p-2}<\E|Z_1|^{p-2}$.

Suppose for some integer $k\ge0$ that these comparisons are known on
$[4,8+2k]$ and $[3,7+2k]$, respectively.  If
$7+2k\le q\le9+2k$, then $q-2\in[5+2k,7+2k]$ and
$q-3\in[4+2k,6+2k]$.  Equation~\eqref{eq:q6-signed-induction-step} extends
the signed upper comparison to $[3,9+2k]$.  If
$8+2k\le p\le10+2k$, then
$p-2\in[6+2k,8+2k]$ and $p-3\in[5+2k,7+2k]$.  Equation
\eqref{eq:q6-absolute-induction-step} extends the moment comparison to
$[4,10+2k]$.  Induction on $k$ covers every $p\ge4$.
For strictness, suppose that $\delta<1$.  In each new interval for $p$, one
has $p-2>4$, so the first inequality in
\eqref{eq:q6-absolute-induction-step} is strict by the induction hypothesis.
Starting from the strict inequality on $4<p\le8$, induction therefore gives
strictness for every $p>4$.  Thus
\eqref{eq:q6-scalar-all-high} is strict when $\delta<1$.
\end{proof}

\begin{proof}[Proof of \cref{thm:q6-main}]
Let $M$ and $G$ be as in the theorem, and let
$\delta=\operatorname{tr}(M^3)/\|M\|_{\mathrm F}^3$.
Since $Z_{-\delta}\stackrel d=-Z_\delta$, one has
$\E|Z_\delta|^p=\E|Z_{|\delta|}|^p$.
Corollary~\ref{cor:q6-matrix-envelope} and
Theorem~\ref{thm:q6-scalar-all-high} therefore give
\[
 \E\left|G^{\mathsf T}MG-\operatorname{tr}M\right|^p
 \le \|M\|_{\mathrm F}^p\E|Z_\delta|^p
 =\|M\|_{\mathrm F}^p\E|Z_{|\delta|}|^p
 \le \|M\|_{\mathrm F}^p\E|g^2-1|^p.
\]
Taking $p$th roots proves the stated matrix inequality.

At $p=4$, \eqref{eq:q6-diagonal-reduction} and
Lemma~\ref{lem:q6-endpoints} give the equality cases.  Suppose that $p>4$.  The strict part of
\cref{thm:q6-scalar-all-high} shows that equality forces $|\delta|=1$.
If $a_1,\ldots,a_n$ are the eigenvalues of $M$ divided by
$\|M\|_{\mathrm F}$, then
$1=|\sum\limits_{i=1}^n a_i^3|\le\sum\limits_{i=1}^n|a_i|^3
\le\sum\limits_{i=1}^n a_i^2=1$.
Thus every nonzero $a_i$ has modulus one.  Since their squares sum to one,
exactly one eigenvalue is nonzero, so $M$ has rank one.  Conversely, if $M$
has rank one, the centered quadratic form is a nonzero constant times
$g^2-1$ or $1-g^2$, and equality holds for every $p\ge4$.

Finally, with $x=g^2$, the density of $x$ on $(0,\infty)$ is
$(2\pi)^{-1/2}x^{-1/2}\exp(-x/2)$.  Therefore
\[
 \|g^2-1\|_p
 =\left[\frac1{\sqrt{2\pi}}\int_0^\infty
   |x-1|^p x^{-1/2}\exp(-x/2)\dd x\right]^{1/p}.
\]
This is \eqref{eq:q6-main-constant} and completes the proof.
\end{proof}

\end{document}